\newcommand*{\R}{\mathbb{R}}
\newcommand{\ie}{\emph{i.e.,}}
\newcommand{\ascal}[2]{\left\langle #1, #2 \right\rangle}
\renewcommand{\aa}[1]{\left\{ #1 \right\}}
\newcommand{\red}[1]{{\color{red}{#1}}}
\definecolor{antiquefuchsia}{rgb}{0.57, 0.36, 0.51}
\definecolor{MyViolet}{rgb}{0.45,0.08,0.95}
\definecolor{MyBrown}{rgb}{0.45,0.08,0}
\definecolor{MyDarkBlue}{rgb}{0,0.08,0.45}
\newcommand{\oldvl}[1]{\red{#1}}
\newtheorem{thm}{Theorem}[section]
\newtheorem{theorem}[thm]{Theorem}
\newtheorem{lemma}[thm]{Lemma}
\newtheorem{prop}[thm]{Proposition}
\newtheorem{rem}[thm]{Remark}
\newtheorem{remark}[thm]{Remark}
\newcommand{\beq}{\begin{equation}}
\newcommand{\eeq}{\end{equation}}
\newcommand{\beqa}{\begin{eqnarray}}
\newcommand{\eeqa}{\end{eqnarray}}
\newcommand{\beqas}{\begin{eqnarray*}}
\newcommand{\eeqas}{\end{eqnarray*}}
\newcommand{\bi}{\begin{itemize}}
\newcommand{\ei}{\end{itemize}}
\newcommand{\nn}{\nonumber}
\newcommand{\dom}{\mathrm{dom}\,}
\newcommand{\Argmin}{\mathrm{Argmin}\,}
\newcommand{\argmin}{\mathrm{argmin}\,}
\newcommand{\bConv}[1]{\overline{\mbox{\rm Conv}}\,(\R^{#1})}
\newcommand{\mConv}[1]{\overline{\mbox{\rm Conv}}_\mu\,(\R^{#1})}
\begin{document}
	\title{Bundle methods with quadratic cuts for deterministic and stochastic strongly convex optimization problems}
\date{}
 
	 \maketitle

\begin{center}
\begin{tabular}{ccc}
\begin{tabular}{c}
Vincent Guigues\\
School of Applied Mathematics, FGV\\
Praia de Botafogo, Rio de Janeiro, Brazil\\
{\tt vincent.guigues@fgv.br}
\end{tabular}&
&
\begin{tabular}{c}
Adriana Washington\\
School of Applied Mathematics, FGV\\
Praia de Botafogo, Rio de Janeiro, Brazil\\
{\tt adriana.washington07@gmail.com}
\end{tabular}
\end{tabular}
\end{center}

	\begin{abstract}
We introduce two new methods for deterministic convex optimization problems: QCC (Quadratic Cuts for Convex optimization) and QB (Quadratic Bundle method). We prove the complexity of these
methods for composite optimization problems
which are the sum of a convex function $\tilde h$ and
of a strongly convex function $\tilde f$ with parameter $\mu$. 
These methods use as building blocks
quadratic approximations of the strongly convex function $\tilde f$ where the quadratic terms
are of form $\frac{\mu}{2}\|\cdot-x_i\|^2$
for trial points $x_i$ computed along iterations (when $\mu=0$ the building blocks are linear approximations).
We extend the idea of using quadratic approximations to pieces of the objective
for some
multistage stochastic optimization problems
which have strongly convex recourse functions
that we approximate as a maximum of quadratic
cuts. We call DASC (Dynamic Approximation for Strongly Convex optimzation) the corresponding
optimization method. When the cuts are linear, the method
boils down to the popular Stochastic Dual Dynamic Programming (SDDP) method.
We provide conditions ensuring strong convexity
of the recourse functions and prove the convergence of DASC. 
Numerical experiments
illustrate the performance and correctness of
DASC, with DASC being much quicker than SDDP for large values of the constants
of strong convexity.\\
\end{abstract}

		{\bf Keywords.} strongly convex composite optimization, iteration-complexity, stochastic programming, SDDP, strongly convex value function.\\
		
		{\bf AMS subject classifications:} 90C15, 90C90\\ 
	 
	\section{Introduction}\label{sec:intro}

 In this paper we introduce new optimization methods for strongly convex optimization, both for deterministic problems and
 for multistage stochastic programs. 
 In the deterministic setting, 
 given $\tilde f, \tilde h: \R^{n} \rightarrow \R\cup \{ +\infty \}$ proper lower semi-continuous  convex functions such that
	$ \dom \tilde h \subseteq \dom \tilde f$ and
	$\tilde f$ $\mu$-convex, the problem we want to solve is
 \begin{equation}\label{eq:ProbIntro1}
	\phi_{*}:=\min \left\{\phi(x):=\tilde f(x)+\tilde h(x): x \in \R^n\right\}.
	\end{equation}
 
 The building blocks of many optimization
methods for convex optimization are linearizations of the objective function and/or of the functions in the constraints which can be combined in many different ways to obtain models for the
functions that these linearizations approximate.
The simplest and oldest of these methods is Kelley's algorithm \cite{kelley66}. This reference was followed by a huge literature
also exploiting the aforementioned
linearizations.
Important advances 
include
bundle methods \cite{lemarechal1995new, kiwiel1986method, kiwiel2006methods,kiwiel1995proximal, kiwiel2000efficiency}
and level bundle methods
\cite{lemarechal1995new}.
A bundle method allowing one cut only has also
been proposed in \cite{bundlegpbstochastic} for stochastic 
composite optimization problems.

In the deterministic setting, our methods
also belong to the class of universal methods that do
not need the knowledge of any problem parameters
(such as Lipschitz constants, diameters of sets)
and to the class of methods for composite optimization that
minimize a sum of functions.
The first universal methods for composite optimization
of form \eqref{eq:ProbIntro1}
under the condition that one of the functions is Hölder continuous 
have been presented in
\cite{nesterov2015universal}
and 
\cite{lan2015bundle}.
Additional universal methods
for solving \eqref{eq:ProbIntro1} have been studied in \cite{mishchenko2020adaptive,zhou2024adabb, alamo2019gradient,alamo2022restart,alamo2019restart,aujol2023parameter,aujol2023fista,aujol2022fista,fercoq2019adaptive,lan2023optimal,nesterov2013gradient,renegar2022simple, universalguilimont}.

In the stochastic setting, we consider
Multistage Stochastic convex Programs (MSPs).
For such problems, the most popular solution method is the Stochastic Dual Dynamic Programming (SDDP) method, introduced in \cite{pereira}. SDDP is a sampling-based extension of the 
Nested Decomposition algorithm \cite{birgemulti} which builds policies for some multistage stochastic convex problems.
It has been used to solve many real-life problems and several extensions of the method have been considered 
such as DOASA \cite{philpot}, CUPPS \cite{chenpowell99}, ReSA \cite{resa}, AND \cite{birgedono}, risk-averse variants (\cite{guiguesrom10}, \cite{guiguesrom12}, \cite{philpmatos}, 
\cite{guiguescoap2013}, \cite{shapsddp}, \cite{shadenrbook}, \cite{kozmikmorton}), inexact variants (\cite{guisvmont}),
variants for problems with a random
number of stages \cite{gui21r}, or statistical
upper bounds \cite{guishapcheng}, see also references therein.
SDDP builds approximations for the cost-to-go functions which take the form of a maximum of
affine functions called cuts.\\

\par {\textbf{Our contributions, both for problems of form \eqref{eq:ProbIntro1} and for multistage stochastic optimization problems, are given below.}}\\ 

\par (A) {\textbf{For deterministic problems of form \eqref{eq:ProbIntro1}.}} To solve \eqref{eq:ProbIntro1}, we introduce two new methods for convex optimization problems: QCC (Quadratic Cuts for Convex optimization) and QB (Quadratic Bundle method). We prove the complexity of these
methods for composite optimization problems
of form \eqref{eq:ProbIntro1}
which are the sum of a convex function $\tilde h$ and
of a $\mu$-convex function $\tilde f$. A new feature of our optimization methods is that their building blocks are quadratic
approximations of the objective obtained at points computed
along the iterations of the methods, instead of linearizations. To the best of our knowledge, this idea has not been used in previous publications. Moreover, we use a model update function which allows us to combine the quadratic
approximations in many different ways to obtain models for the approximated functions.
Three special cases of these models are (A) a unique quadratic function 
which is a convex combination of all quadratic approximations, (B) a maximum of two
quadratic cuts, and (C) the maximum of all quadratic approximations
computed so far.\\

\par {\textbf{(B) For multistage stochastic programs.}}
We propose an extension of SDDP method called DASC (Dynamic Approximation for Strongly Convex problems), which is a  
 Decomposition Algorithm for multistage stochastic programs having Strongly Convex cost functions.
Similarly to SDDP, at each iteration the algorithm computes in a forward pass a sequence of trial points which are used
in a backward pass to build lower bounding functions called cuts. However, contrary to SDDP where cuts are affine functions,
the cuts computed with DASC   are quadratic functions
and therefore
the cost-to-go functions are approximated by a maximum of quadratic functions. We also show the convergence of DASC.\\

\par {\textbf{(C) For multistage stochastic programs.}} We present numerical simulations showing the correctness of DASC and comparing the performance on DASC and SDDP on a simple MSP.\\

\par The outline of the paper is as follows. 
Section \ref{sec:framework} presents
our Quadratic Cuts for Convex optimization method and shows its complexity.
Section \ref{sec:qb} describes our Quadratic Bundle method and gives its complexity.
In Section \ref{sec:scvaluefunc}, we give in Proposition \ref{strongconvvfunc} a 
simple condition ensuring that the value function of a convex optimization problem is strongly convex.
In Section \ref{sec:pbformass}, we introduce the class of
multistage stochastic optimization  problems to which DASC applies and 
the necessary assumptions. DASC algorithm, which is based on Proposition \ref{strongconvvfunc}, is given in Section \ref{sec:dasc}, while 
convergence  
of DASC is shown in Section \ref{convanalysis}.
Numerical simulations comparing DASC and SDDP are given in Section \ref{sec:num} while concluding remarks and possible extensions are discussed in Section \ref{sec:conc}.

    \subsection{Basic definitions and notation.} Let $\R$ denote the set of real numbers.
    Let $ \R_+ $ and $ \R_{++} $ denote the set of non-negative real numbers and the set of positive real numbers, respectively.
	Let $\R^n$ denote the standard $n$-dimensional Euclidean space equipped with  inner product and norm denoted by $\left\langle \cdot,\cdot\right\rangle $
	and $\|\cdot\|$, respectively. 
	Let $\log(\cdot)$ denote the natural logarithm.

	Let $\Psi: \R^n\rightarrow (-\infty,+\infty]$ be given. Let $\dom \Psi:=\{x \in \R^n: \Psi (x) <\infty\}$ denote the effective domain of $\Psi$.
 We say that
	$\Psi$ is proper if $\dom \Psi \ne \emptyset$.
	A proper function $\Psi: \R^n\rightarrow (-\infty,+\infty]$ is $\mu$-convex for some $\mu \ge 0$ if
	$$
	\Psi(\alpha z+(1-\alpha) z')\leq \alpha \Psi(z)+(1-\alpha)\Psi(z') - \frac{\alpha(1-\alpha) \mu}{2}\|z-z'\|^2
	$$
	for every $z, z' \in \dom \Psi$ and $\alpha \in [0,1]$.
 The set of all proper lower semicontinuous $\mu$-convex functions is denoted by $\mConv{n}$.
 When $\mu=0$, we simply denote
    $\mConv{n}$ by $\bConv{n}$.
	For $\varepsilon \ge 0$, the \emph{$\varepsilon$-subdifferential} of $ \Psi $ at $z \in \dom \Psi$ is denoted by
	\begin{equation}
        \partial_\varepsilon \Psi (z):=\left\{ s \in\R^n: \Psi(z')\geq \Psi(z)+\left\langle s,z'-z\right\rangle -\varepsilon, \forall z'\in\R^n\right\}.
        \end{equation}
	The subdifferential of $\Psi$ at $z \in \dom \Psi$, denoted by $\partial \Psi (z)$, is by definition the set  $\partial_0 \Psi(z)$.

\if{

\subsection{A motivating algorithm}

We consider the following optimization problem 
\begin{equation}
\label{pb:initial}
    \min \left\{f(x) \;|\; x \in X\right\},
\end{equation}
where $f$ is a convex lower-semicontinuous function, subdifferentiable on $X$ a compact convex subset of $\R^n$.
The Kelley cutting plane method applied to Problem~\eqref{pb:initial} consists in approximating the convex function $f$ 
by a polyhedral approximation that we iteratively refine. 
More precisely, let $f'$ define a selection of $\partial f$, \ie a function such that, for all $x\in X$, $f'(x) \in \partial f(x)$.
Further,  denote $\ell_f(\cdot,x):= f(x) + \ascal{f'(x)}{\cdot - x}$ , for any $x \in X$, the tangent of $f$ at $x$ of slope $f'(x)$.
In particular, as $f$ is convex we have $\ell_f(\cdot,x) \leq f$, and any trial point $x_k \in X$ define a so-called (affine) cut $\ell_f(\cdot,x_k) \leq f$.
Then, the Kelley algorithm, given a collection of trial points $(x^\kappa)_{\kappa \in [k]}$ at iteration $k$,
defines  a model $\Gamma^k$ of $f$ as the maximum of past cuts, \ie $ \max_{\kappa \in [k]} \ell_f(\cdot,x^k) \leq f$. 
The next trial point $x^{k+1}$ is defined as a minimum of $\Gamma^k$ over $X$. 
This is detailed in~\cref{alg:Kelley}, where we can note that, at each iteration we have a candidate solution $x_k  \in X$, and an optimality gap $t_k \geq 0$.

\begin{algorithm}
\KwData{$x_1 \in X$}
\KwResult{approximate solution $x_n$, gap $t_n$}
$\Gamma^1 = \ell_f(\cdot,x_1) $;\\
\For{$k \in [n-1]$}
{
    $x_{k+1} \in \argmin_{x \in X} \Gamma_k(x) $ ;\\
    $t_{k+1} = f(x_{k+1}) - \Gamma_k(x_{k+1})$ ;\\
   $\Gamma_{k+1} \leftarrow \max \aa{\Gamma_{k}, \ell_f(\cdot,x_{k+1})}$;
}
\caption{Kelley's cutting plane algorithm}\label{alg:Kelley}
\end{algorithm}

This algorithm is well-known and studied but only leverages the convexity of $f$. 
Assume now that we want to minimize $\mu$-convex function $h$ over $X$, we can then use quadratic cuts 
\begin{equation}
\label{eq:qf}
    q_h= \underbrace{h(x) + \ascal{h'(x)}{\cdot - x}}_{\ell_h(\cdot,x)} + \frac{\mu}{2}\| \cdot - x\|^2 \leq h
\end{equation}
instead of the linear cuts 
$\ell_h(\cdot,x)$, yielding a new, to our knowledge, strongly-convex Kelley algorithm 
\begin{algorithm}
\KwData{$x_1 \in X$}
\KwResult{approximate solution $x_n$, gap $t_n$}
$\Gamma^1 = q_h(\cdot,x_1) $;\\
\For{$k \in [n-1]$}
{
    $x_{k+1} \in \argmin_{x \in X} \Gamma_k(x) $ ;\\
    $t_{k+1} = f(x_{k+1}) - \Gamma_k(x_{k+1})$ ;\\
   $\Gamma_{k+1} \leftarrow \max \aa{\Gamma_{k}, q_h(\cdot,x_{k+1})}$;
}
\caption{Cutting quadratic algorithm}\label{alg:Kelley_quad}
\end{algorithm}

To study the complexity of this algorithm we suggest seeing it as a composite strongly convex optimization problem. 
First, we define $f = \delta_X$ the indicator function of $X$, such that $f(x)=0$ if $x\in X$, and $f(x)=+\infty$ otherwise.
As $X$ is convex, so is $f$.
Now consider 

}\fi

\section{QCC method and its complexity}\label{sec:framework}

Let $\tilde f, \tilde h: \R^{n} \rightarrow \R\cup \{ +\infty \}$ be proper lower semi-continuous  convex functions such that
	$ \dom \tilde h \subseteq \dom \tilde f$ and
	$\tilde f$ is $\mu$-convex
	for some $ \mu\geq 0 $. 
 In this section, we are interested in solving problem \eqref{eq:ProbIntro1}.

We assume that we have for $\tilde f$
a subgradient oracle, i.e.,
		a function $\tilde f':\dom \tilde h \to \R^n$
		satisfying $\tilde f'(x) \in \partial \tilde f(x)$ for every $x \in \dom \tilde h$.
We also assume that the set of optimal solutions $X^*$ of
		problem \eqref{eq:ProbIntro1} is nonempty.
 
\subsection{QCC method}\label{subsec:update}
QCC method computes at iteration
$j$ an approximate solution of problem 
\eqref{eq:ProbIntro1} given by
$$
x_{j} =\underset{u\in \R^n}\argmin
	     \;\Gamma_{j}(u),
$$
where $\Gamma_{j}$ is a model
for $\tilde f+\tilde h$ satisfying
$$
\Gamma_{j}\in \mConv{n} \mbox{ and }\Gamma_{j}\le  \tilde f+\tilde h.
$$

As we shall see, model $\Gamma_j$ is actually
the sum of $\tilde h$ and of a model for $\tilde f$.
Therefore, the composite QCC method 
applies to problems where $\tilde f$ in the objective
may be a {\em{complicate}} function to minimize
while $\tilde h$ is a simple function to minimize.
By convexity of $\tilde f$, the linear
approximation 
$$
\ell_{\tilde f}(\cdot,x)=
\tilde f(x)
+\langle \tilde f'(x),\cdot-x\rangle
$$
of $\tilde f$ at $x$
satisfies $\ell_{\tilde f}(\cdot,x) \leq \tilde f$
and such linearizations are the basis of many
methods for convex optimization where $\tilde f$ is in the objective function (or even in the constraints).
Since $\tilde f$ is strongly convex with parameter
$\mu\geq0$, the quadratic
approximation 
\begin{equation}\label{quadapprox}
q_{\tilde f}(u;x) =\tilde f(x)
+\langle \tilde f'(x),u-x\rangle + \frac{\mu}{2}\|u-x\|^2
=\ell_{\tilde f}(u,x) + \frac{\mu}{2}\|u-x\|^2
\end{equation}
of $\tilde f$ at $x$ provides a better approximation
of $\tilde f$ than $\ell_{\tilde f}(u;x)$ and is still a lower bounding function
for $\tilde f$:
\begin{equation}\label{quadminorant}
\tilde f(u) \geq q_{\tilde f}(u;x) \geq \ell_{\tilde f}(u,x)
\end{equation}
for every $u \in \mathbb{R}^n$.
These quadratic functions will be combined to provide
models for $\tilde f$ in QCC.

  For an iteration $j \geq 2$, the model
  $\Gamma_j$ is obtained using a
  Quadratic Update function 
  $$
  \mbox{QU}(\Gamma_{j-1},x_{j-1},\tau)$$ depending
  on three inputs: the previous model
  $\Gamma_{j-1}$, the previous approximate
  solution $x_{j-1}$, and a parameter
  $\tau \in (0,1)$. 
We now describe this function and make a few comments
on this model update function.\\
\noindent\rule[0.5ex]{1\columnwidth}{1pt}
	
	QU$(\Gamma,x,\tau)$

    \noindent\rule[0.5ex]{1\columnwidth}{1pt}
    {\bf Input:} Let $\tau \in (0,1)$ and
    $(x,\Gamma) \in \R^n \times \mConv{n}$ such that $\Gamma\le \phi$ and 
    $x=\underset{u\in \R^n}\argmin \;\Gamma (u)$.

\noindent Find function $\Gamma^+$ such that
\begin{equation}\label{def:Gamma}
	    \Gamma^+ \in \mConv{n}, \qquad \tau \bar \Gamma(\cdot)  + (1-\tau) [q_{\tilde f}(\cdot;x)+\tilde h(\cdot)] \le \Gamma^+(\cdot) \le \phi(\cdot),
	\end{equation}
   where $q_{\tilde f}(\cdot;x)$ is as in \eqref{quadapprox} and
   $\bar \Gamma(\cdot) $ is such that
\begin{equation}\label{def:bar Gamma}
	\bar \Gamma \in \mConv{n}, \quad
 \bar \Gamma(x) = \Gamma(x), \quad 
	x = \underset{u\in \R^n}\argmin \;\bar \Gamma (u).
	\end{equation}
     {\bf Output:} $\Gamma^+$.
     
    \noindent\rule[0.5ex]{1\columnwidth}{1pt}

	Clearly, the above update scheme does not completely determine $\Gamma^+$
	but rather gives minimal conditions on it which are suitable for the complexity analysis
	of QCC developed in the next section.
A natural choice for the first model $\Gamma_1$ is $\Gamma_1 = \tilde h + q_{\tilde f}(,x_0)$ for a given first (arbitrary) trial point $x_0$.
The new model $\Gamma^{+}$ must be above a convex combination of
two functions, with $\tau \in (0,1)$ being the parameter defining this convex combination.
The first function in this convex combination is
$\tilde h$ plus
the last computed quadratic cut $q_{\tilde f}(\cdot,x)$.
The second function in this convex combination is
another model
$\bar \Gamma$ satisfying some conditions (this model can be as a special case 
the previous model $\Gamma$).
The models $\Gamma$
are convex and lower
bounding functions for $\phi$.
 
	We now describe three concrete
 update schemes (E1), (E2), and (E3) which are special ways
 of implementing QU function. 
	\begin{itemize} 
     \item [(E1)] {\bf One-cut scheme}: 
    	    This scheme obtains $\Gamma^+$ as
	    \begin{equation}\label{eq:affine}
        \Gamma^+(\cdot)= \Gamma^+_\tau(\cdot) := \tau \Gamma(\cdot) + (1-\tau) [q_{\tilde f}(\cdot;x)+\tilde h(\cdot)]
        \end{equation}
        where $x = \underset{u\in \R^n}\;\argmin \Gamma (u)$ and $\tau\in (0,1)$. 
        Clearly, $\Gamma^+$ satisfies 
        \eqref{def:Gamma} 
        with $\bar \Gamma = \Gamma$ satisfying
        \eqref{def:bar Gamma}. 
    Also, if
        $\Gamma$ is the sum of $\tilde h$ and
        a quadratic function underneath $\tilde f$,
        then so is $\Gamma^+$.	   
Such is the case if the first model 
$\Gamma_1$ is chosen to be 
$q_{\tilde f}(u;x_0)+\tilde h(u)$
     in which case
     $\Gamma_j$ is of the form
	\begin{equation}\label{eq:Gamma-form}
	    \Gamma_j(\cdot) = \sum_{i=0}^{j-1} \alpha_i q_{\tilde f}(u;x_i)+\tilde h(\cdot)
	\end{equation}
	where 
  $\{\alpha_0,\ldots,\alpha_{j-1}\} \subset \R_{++}$ are scalars such that
	$\displaystyle \sum_{i=0}^{j-1} \alpha_i =1$.
 This amounts to use a very simple approximaton
 of $\tilde f$ which is a quadratic function, itself
 expressed as a convex combination of quadratic approximations
 of $\tilde f$ computed along iterations.
        \item [(E2)] \textbf{Two-cuts scheme:} 
        For this scheme, it is assumed that
        $\Gamma$
        has the form 
        \begin{equation}\label{def:Gamma-E2}
            \Gamma=\max \{Q_{\tilde f},q_{\tilde f}(\cdot;x^-)\}+{\tilde h}
        \end{equation}
        where $\tilde h \in \bConv{n}$ and 
        $Q_{\tilde f}$ is a quadratic function satisfying $Q_{\tilde f}\le \tilde f$. In view of \
        $$
        x= \underset{u\in \R^n}\argmin \;\Gamma (u)=\underset{u\in \R^n}\argmin \max \{Q_{\tilde f}(u),q_{\tilde f}(u;x^-)\}+{\tilde h}(u),
        $$
        there exists
        $\theta \in [0,1]$ such that
        \begin{align}
            &0 \in    \partial \tilde h(x) 
+ \theta \nabla Q_{\tilde f}(x) + (1-\theta) \nabla q_{\tilde f}(x;x^-), \label{theta1} \\
            &\theta Q_{\tilde f}(x) + (1-\theta) q_{\tilde f}(x;x^-) = \max \{Q_{\tilde f}(x),q_{\tilde f}(x;x^-)\}.\label{theta2}
        \end{align}        
        The scheme then sets
        \begin{equation}\label{def:Af+}
         Q_{\tilde f}^+(\cdot) :=  \theta Q_{\tilde f}(\cdot) + (1-\theta) q_{\tilde f}(\cdot;x^-)
     \end{equation}
     and outputs the function $\Gamma^+$
      defined as
\begin{equation}\label{eq:G-agg}
		    \Gamma^+(\cdot)  := 
		    \max\{Q_{\tilde f}^+(\cdot),q_{\tilde f}(\cdot;x)\} + {\tilde h}(\cdot).
	    \end{equation}
    We now show that this model
     $\Gamma^+$ satisfies \eqref{def:Gamma} 
        with $\bar \Gamma = Q_{\tilde f}^+ + \tilde h$ satisfying
        \eqref{def:bar Gamma}. 
        It is clear that $\Gamma^{+},\bar \Gamma \in \mConv{n}$. Moreover, $Q_{\tilde f}(\cdot) \leq \tilde f(\cdot)$
        and $q_{\tilde f}(\cdot;x^{-}) \leq \tilde f(\cdot)$ implies that $\Gamma^{+} \leq \phi$. Next, relation
        $\tau \bar \Gamma(\cdot)  + (1-\tau) [q_{\tilde f}(\cdot;x)+{\tilde h}(\cdot)] \le \Gamma^+(\cdot)$
        is an immediate consequence
        of the definitions of
        $\bar \Gamma$ and
        $\Gamma^+$ and we have shown that
        $\Gamma^+$ satisfies \eqref{def:Gamma}.
     The relation $\Gamma(x)=\bar \Gamma(x)$
     immediately follows from
     the definition $\bar \Gamma = 
     Q_{\tilde f}^{+} + {\tilde h}$
     of $\bar \Gamma$ and from relations
     \eqref{def:Gamma-E2}, \eqref{theta2},
     and \eqref{def:Af+}. Finally, by
     \eqref{theta1} we 
     deduce
$0 \in \partial \bar \Gamma(x)$, which can be written $x= \underset{u\in \R^n}\argmin \;\bar \Gamma (u)$ and we have
shown that $\bar \Gamma$ satisfies \eqref{def:bar Gamma}.  Therefore, Example (E2) shows
how to obtain a model of
$\tilde f$ made of two quadratic cuts only.

	    \item [(E3)] {\textbf{Multiple-cuts scheme:}} For this scheme, it is assumed that $\Gamma$ is of the form
	    $\Gamma=\Gamma(\cdot;B)$ where
	    $B \subset \R^n$ is a finite set and $\Gamma(\cdot;B)$ is defined by
$$
\Gamma(\cdot;B)=
\max(q_{\tilde f}(\cdot;b):b \in B)+{\tilde h}(\cdot).
$$
This scheme 
      chooses the next set $B^+$ so that
	    \begin{equation}\label{eq:C+}
        B(x) \cup \{x\} \subset B^+ \subset B \cup \{x\}
        \end{equation}
        where
        \begin{equation}\label{def:C+}
            B(x) := \{ b \in B : q_{\tilde f}(x;b)+{\tilde h}(x) = \Gamma(x) \},
        \end{equation}
         and then
	    outputs $\Gamma^+ = \Gamma(\cdot;B^+)$. We now show that this model
     $\Gamma^+$ satisfies \eqref{def:Gamma} with $\bar \Gamma =\Gamma(\cdot;B(x))$ satisfying
        \eqref{def:bar Gamma}. 
As before, it is clear that $\Gamma^{+},\bar \Gamma \in \mConv{n}$
and that $\Gamma^{+} \leq \phi$
since all quadratic functions $q_{\tilde f}(\cdot;z)$ satisfy 
        $q_{\tilde f}(\cdot;z) \leq {\tilde f}(\cdot)$ for all 
        $z$. From the definitions of
        $\bar \Gamma$, $\Gamma^{+}$,
        and $B^+$, we have
        $$
        \Gamma^{+}(\cdot) \geq 
        \max(\bar \Gamma(\cdot), q_{\tilde f}(\cdot;x)+{\tilde h}),
        $$
which implies 
$\tau \bar \Gamma(\cdot)  + (1-\tau) [q_{\tilde f}(\cdot;x)+{\tilde h}(\cdot)] \le \Gamma^+$
and we have shown
that $\Gamma^+$ satisfies
\eqref{def:Gamma}. Next, using the
fact that $\partial \Gamma(x)=\partial  \bar \Gamma(x)$ and the relation
$0 \in \partial  \Gamma(x)$, we deduce
that $x$ minimizes $\bar \Gamma$. Finally,
$$
\bar \Gamma(x)=\max(q_{\tilde f}(x;b):b  \in B(x))+\tilde h(x)=\max(q_{\tilde f}(x;b):b  \in B)+{\tilde h}(x)=\Gamma(x)
$$
and therefore we have shown that $\bar \Gamma$
satisfies \eqref{def:bar Gamma}. Taking $B^{+}=B\cup\{x\}$ for every iteration, we obtain a model for
$\tilde f$ given by the maximum of all quadratic cuts previously computed.

	\end{itemize}

We are now in a position to describe the QCC framework, which is
given below. It uses
model update function QU.
	
	\noindent\rule[0.5ex]{1\columnwidth}{1pt}
	
	QCC
	
	\noindent\rule[0.5ex]{1\columnwidth}{1pt}
	\begin{itemize}
		\item [0.] Let $ x_0\in \dom h $, $ \bar \varepsilon>0 $ and $\tau \in (0,1)$ be given.
  \if{
		such that
        \begin{equation}\label{rel:tau1} 
	        \frac{\tau}{1-\tau} \ge \frac{8 T_{\bar \varepsilon}^2}{\mu\bar \varepsilon}
	    \end{equation}
		where $T_{\bar \varepsilon}$ is as in \eqref{def:T},
  }\fi
  Set  
		$y_0=x_0$ and $j=1$;
		\item[1.] if $j=1$ then find $\Gamma_{j}$ such that
		\begin{equation}\label{ineq:require}
		    \Gamma_{j}\in \mConv{n}, \quad q_{\tilde f}(\cdot;x_{j-1})+
      {\tilde h} \le \Gamma_{j}\le  \phi;
		\end{equation}
		else, let 
  $\Gamma_{j}=$QU$(\Gamma_{j-1},x_{j-1},\tau)$;
		
	    \item[2.]
	    compute 
		\begin{equation}
	    x_{j} =\underset{u\in \R^n}\argmin \;
	     \Gamma_{j}(u),  \label{def:xj}
	    \end{equation}
		 choose 
   \begin{equation}\label{eq:yj-intuition}
    y_j \in \Argmin \left\lbrace \phi(x) :
		x \in \{x_0,x_1,\ldots,x_j\}
		\right\rbrace, 
\end{equation}
		and set  
		\begin{equation}\label{ineq:hpe1}
		    m_{j}=\Gamma_{j}(x_{j}), \qquad t_{j} = \phi(y_{j}) - m_{j};
		\end{equation}
		\item[3.] set $j\leftarrow j+1$ and go to Step 1.
	\end{itemize}
	\rule[0.5ex]{1\columnwidth}{1pt}

\if{
For iteration $j=1$, it is  required that
		$$
		    \Gamma_{j}\in \mConv{n} \mbox{ and } \ell_f(\cdot;x_{j-1})+h \le \Gamma_{j}\le  \phi.
		$$

  For an iteration $j \geq 2$, the model
  $\Gamma_j$ is obtained using a
  Model Update function MU$(\Gamma_{j-1},x_{j-1},\tau)$ depending
  on three inputs: the previous model
  $\Gamma_{j-1}$, the previous approximate
  solution $x_{j-1}$, and a parameter
  $\tau \in (0,1)$. 
We now describe this function and make a few comments
on this model update function.\\
\noindent\rule[0.5ex]{1\columnwidth}{1pt}
	
	MU$(\Gamma,x,\tau)$

    \noindent\rule[0.5ex]{1\columnwidth}{1pt}
    {\bf Input:} Let $\tau \in (0,1)$ and
    $(x,\Gamma) \in \R^n \times \mConv{n}$ such that $\Gamma\le \phi$.

\noindent Find function $\Gamma^+$ such that
\begin{equation}\label{def:Gamma}
	    \Gamma^+ \in \mConv{n}, \qquad \tau \bar \Gamma(\cdot)  + (1-\tau) [\ell_f(\cdot;x)+h(\cdot)] \le \Gamma^+(\cdot) \le \phi(\cdot),
	\end{equation}
   where $\ell_f(\cdot;\cdot)$ is as in \eqref{def:gamma} and
   $\bar \Gamma(\cdot) $ is such that
\begin{equation}\label{def:bar Gamma}
	\bar \Gamma \in \mConv{n}, \quad
 \bar \Gamma(x) = \Gamma(x), \quad 
	x = \underset{u\in \R^n}\argmin \;\bar \Gamma (u).
	\end{equation}
     {\bf Output:} $\Gamma^+$.
     
    \noindent\rule[0.5ex]{1\columnwidth}{1pt}

	Clearly, the above update scheme does not completely determine $\Gamma^+$
	but rather gives minimal conditions on it which are suitable for the complexity analysis
	of QCC.
	
	We now describe three concrete
 update schemes (E1), (E2), and (E3) which are special ways
 of implementing MU function. 
	\begin{itemize} 
     \item [(E1)] {\bf One-cut scheme}: 
    	    This scheme obtains $\Gamma^+$ as
	    \begin{equation}\label{eq:affine}
        \Gamma^+= \Gamma^+_\tau := \tau \Gamma + (1-\tau) [\ell_f(\cdot;x)+h]
        \end{equation}
        where $x = \underset{u\in \R^n}\argmin \Gamma (u)$ and $\tau\in (0,1)$ depends on $(L_f,M_f,\mu)$. 
        Clearly, $\Gamma^+$ satisfies 
        \eqref{def:Gamma} 
        with $\bar \Gamma = \Gamma$ satisfying
        \eqref{def:bar Gamma}. 
    Also, if
        $\Gamma$ is the sum of $h$ and
        an affine function underneath $f$,
        then so is $\Gamma^+$.	   
Such is the case if the first model 
$\Gamma_1$ is chosen to be 
$\ell_f(\cdot;x_0)+h$
     in which case
     $\Gamma_j$ is of the form
	\begin{equation}\label{eq:Gamma-form}
	    \Gamma_j(\cdot) = \sum_{i=0}^{j-1} \alpha_i \ell_f(\cdot;x_i)  + h(\cdot)
	\end{equation}
	where 
  $\{\alpha_0,\ldots,\alpha_{j-1}\} \subset \R_{++}$ are scalars such that
	$\displaystyle \sum_{i=0}^{j-1} \alpha_i =1$.
        \item [(E2)] \textbf{Two-cuts scheme:} 
        For this scheme, it is assumed that
        $\Gamma$
        has the form 
        \begin{equation}\label{def:Gamma-E2}
            \Gamma=\max \{A_f,\ell_f(\cdot;x^-)\}+h
        \end{equation}
        where $h \in \mConv{n}$ and
        $A_f$ is an affine function satisfying $A_f\le f$. In view of \
        $$
        x= \underset{u\in \R^n}\argmin \Gamma (u)=\underset{u\in \R^n}\argmin \max \{A_f(u),\ell_f(u;x^-)\}+h(u),
        $$
        there exists
        $\theta \in [0,1]$ such that
        \begin{align}
            &0 \in    \partial h(x) 
+ \theta \nabla A_f + (1-\theta) f'(x^-), \label{theta1} \\
            &\theta A_f(x) + (1-\theta) \ell_f(x;x^-) = \max \{A_f(x),\ell_f(x;x^-)\}. \label{theta2}
        \end{align}
        The scheme then sets
        \begin{equation}\label{def:Af+}
         A_f^+(\cdot) :=  \theta A_f(\cdot) + (1-\theta) \ell_f(\cdot;x^-)
     \end{equation}
     and outputs the function $\Gamma^+$
      defined as
\begin{equation}\label{eq:G-agg}
		    \Gamma^+(\cdot)  := 
		    \max\{A_f^+(\cdot),\ell_f(\cdot;x)\} + h(\cdot).
	    \end{equation}
    We now show that this model
     $\Gamma^+$ satisfies \eqref{def:Gamma} 
        with $\bar \Gamma = A_f^+ + h$ satisfying
        \eqref{def:bar Gamma}. 
        It is clear that $\Gamma^{+},\bar \Gamma \in \mConv{n}$. Moreover, $A_f(\cdot) \leq f(\cdot)$
        and $\ell_f(\cdot;z) \leq f(\cdot)$ for all 
        $z $ implies that $\Gamma^{+} \leq \phi$. Next, relation
        $\tau \bar \Gamma(\cdot)  + (1-\tau) [\ell_f(\cdot;x)+h(\cdot)] \le \Gamma^+(\cdot)$
        is an immediate consequence
        of the definitions of
        $\bar \Gamma$ and
        $\Gamma^+$ and we have shown that
        $\Gamma^+$ satisfies \eqref{def:Gamma}.
     The relation $\Gamma(x)=\bar \Gamma(x)$
     immediately follows from
     the definition $\bar \Gamma = A_f^+ + h$
     of $\bar \Gamma$ and from relations
     \eqref{def:Gamma-E2}, \eqref{theta2},
     and \eqref{def:Af+}. Finally, by
     \eqref{theta1} we have
     $$
     0 \in \partial h(x) 
+ \theta \nabla A_f + (1-\theta) f'(x^-)
=\partial h(x) + \nabla A_f^{+}=\partial \bar \Gamma(x)
     $$
which can be written $x= \underset{u\in \R^n}\argmin \bar \Gamma (u)$ and we have
shown that $\bar \Gamma$ satisfies \eqref{def:bar Gamma}.    
	    \item [(E3)] {\textbf{Multiple-cuts scheme:} For this scheme, it is assumed that $\Gamma$ is of the form
	    $\Gamma=\Gamma(\cdot;B)$ where
	    $B \subset \R^n$ is a finite set and $\Gamma(\cdot;B)$ is defined by
$$
\Gamma(\cdot;B)=
\max(\ell_f(\cdot;b):b \in B)+h(\cdot).
$$
This scheme 
      chooses the next set $B^+$ so that
	    \begin{equation}\label{eq:C+}
        B(x) \cup \{x\} \subset B^+ \subset B \cup \{x\}
        \end{equation}
        where
        \begin{equation}\label{def:C+}
            B(x) := \{ b \in B : \ell_f(x;b)+h(x) = \Gamma(x) \},
        \end{equation}
         and then
	    outputs $\Gamma^+ = \Gamma(\cdot;B^+)$. We now show that this model
     $\Gamma^+$ satisfies \eqref{def:Gamma} with $\bar \Gamma =\Gamma(\cdot;B(x))$ satisfying
        \eqref{def:bar Gamma}. 
As before, it is clear that $\Gamma^{+},\bar \Gamma \in \mConv{n}$
and that $\Gamma^{+} \leq \phi$
since all linearizations $\ell_f(\cdot;z)$
of $f$ satisfy 
        $\ell_f(\cdot;z) \leq f(\cdot)$ for all 
        $z$. From the definitions of
        $\bar \Gamma$, $\Gamma^{+}$,
        and $B^+$, we have
        $$
        \Gamma^{+}(\cdot) \geq 
        \max(\bar \Gamma(\cdot), \ell_f(\cdot;x)+h),
        $$
which implies 
$\tau \bar \Gamma(\cdot)  + (1-\tau) [\ell_f(\cdot;x)+h(\cdot)] \le \Gamma^+$
and we have shown
that $\Gamma^+$ satisfies
\eqref{def:Gamma}. Next, using the
fact that $\partial \Gamma(x)=\partial  \bar \Gamma(x)$ and the relation
$0 \in \partial  \Gamma(x)$, we deduce
that $x$ minimizes $\bar \Gamma$. Finally,
$$
\bar \Gamma(x)=\max(\ell_f(x;b):b  \in B(x))+h(x)=\max(\ell_f(x;b):b  \in B)+h(x)=\Gamma(x)
$$
and therefore we have shown that $\bar \Gamma$
satisfies \eqref{def:bar Gamma}.

A special case of the multiple-cut
scheme takes $B^{+}=B \cup \{x\}$
and amounts to apply a cutting
plane method for $f$ while
keeping the strongly convex
function $h$ in the objective.

	\end{itemize}

We are now in a position to describe the QCC framework, which is
given below.
	
	\noindent\rule[0.5ex]{1\columnwidth}{1pt}
	
	QCC
	
	\noindent\rule[0.5ex]{1\columnwidth}{1pt}
	\begin{itemize}
		\item [0.] Let $ x_0\in \dom h $, $ \bar \varepsilon>0 $ and $\tau \in (0,1)$ be given
		such that
        \begin{equation}\label{rel:tau1} 
	        \frac{\tau}{1-\tau} \ge \frac{8 T_{\bar \varepsilon}^2}{\mu\bar \varepsilon}
	    \end{equation}
		where $T_{\bar \varepsilon}$ is as in \eqref{def:T}, and set  
		$y_0=x_0$, and $j=1$;
		\item[1.] if $j=1$ then find $\Gamma_{j}$ such that
		\begin{equation}\label{ineq:require}
		    \Gamma_{j}\in \mConv{n}, \quad \ell_f(\cdot;x_{j-1})+h \le \Gamma_{j}\le  \phi;
		\end{equation}
		else, let 
  $\Gamma_{j}=$MU$(\Gamma_{j-1},x_{j-1},\tau)$;
		
	    \item[2.]
	    compute 
		\begin{equation}
	    x_{j} =\underset{u\in \R^n}\argmin \;
	     \Gamma_{j}(u),  \label{def:xj}
	    \end{equation}
		 choose 
   \begin{equation}\label{eq:yj-intuition}
    y_j \in \Argmin \left\lbrace \phi(x) :
		x \in \{x_0,x_1,\ldots,x_j\}
		\right\rbrace, 
\end{equation}
		and set  
		\begin{equation}\label{ineq:hpe1}
		    m_{j}=\Gamma_{j}(x_{j}), \qquad t_{j} = \phi(y_{j}) - m_{j};
		\end{equation}
		\item[3.] set $j\leftarrow j+1$ and go to Step 1.
	\end{itemize}
	\rule[0.5ex]{1\columnwidth}{1pt}








}\fi

At every iteration $j$, QCC computes a new quadratic cut 
$q_{\tilde f}(\cdot,x_{j-1})$ for
$\tilde f$ and
calls the model update function
$QU$ with this cut and the previous model $\Gamma_{j-1}$ for $\tilde f + \tilde h$.
Sequence $t_j$ is
used for a stopping criterion: when $t_j\leq \bar \varepsilon$, we have found an 
$\bar \varepsilon$-optimal solution of
\eqref{eq:ProbIntro1}.\\

\par {\textbf{Application.}} Consider the problem
\begin{equation}\label{pboptdef}
\min \; \{\tilde f(x):x \in X\}
\end{equation}
with $\tilde f$ $\mu$-convex
and $X \subset \mathbb{R}^n$ convex, compact, and nonempty.

Function $\tilde f$ being $\mu$-convex, recall that 
it is lower bounded by quadratic
functions, namely \eqref{quadminorant}
holds.

Therefore, two natural models for $\tilde f$ in this situation are a maximum of lower bounding quadratic cuts
and a linear combination of lower bounding
quadratic cuts. This gives rise to two algorithms computing
$x_j$ using the iterations
\begin{equation}\label{xjcompute}
x_j = \displaystyle \mbox{argmin}_{u \in \mathbb{R}^n} \; \Gamma_j(u)
\end{equation}
where for the first algorithm
the model $\Gamma_j$ is
\begin{equation}\label{boundq1}
\Gamma_j(u)=\max(q_{{\tilde f}}(u;x_0),\ldots,q_{\tilde f}(u;x_{j-1}))+\tilde h(u)
\end{equation}
while for the second algorithm
the model $\Gamma_j$ is given by
\begin{equation}\label{boundq2}
\Gamma_j(u)=\sum_{i=0}^{j-1} \alpha_i q_{\tilde f}(u;x_{i})+\tilde h(u)
\end{equation}
where $\tilde h=\delta_X$ is the indicator function of set $X$ and is convex.
While these algorithms are quite natural
for solving problem \eqref{pboptdef},
we are not aware of another paper
proposing these algorithms.
However, it can be seen that
these algorithms are special
instances of QCC framework.
Therefore, the complexity of these
algorithms is also given
by Theorem \ref{complcssc} below which gives the complexity of QCC.

	\subsection{Complexity analysis of QCC}\label{sec:analysis}

\subsubsection{Reformulation of QCC}\label{reformQCC}

 In this section, we provide a reformulation of
 QCC in terms of linear (instead of quadratic in the
 original QCC formulation) cuts.
The model update black-box QU$(\Gamma,x,\tau)$
defines a new model $\Gamma^{+}$ satisfying
\begin{equation}\label{reformmodu}
	    \Gamma^+ \in \mConv{n}, \qquad \tau \bar \Gamma(\cdot)  + (1-\tau) [q_{\tilde f}(\cdot;x)+{\tilde h}(\cdot)] \le \Gamma^+(\cdot) \le \phi(\cdot).
\end{equation}
Define functions $f$ and
$h$ by
\begin{equation}\label{defftildehtilde}
f(x)=\tilde f(x)-\frac{\mu}{2}\|x\|^2 \mbox{ and }h(x)=\tilde h(x)+\frac{\mu}{2}\|x\|^2.
\end{equation}
Observe that problem \eqref{eq:ProbIntro1}
can be written
\begin{equation}\label{reformcomposite}
\min_{x \in \mathbb{R}^n} \;f(x) + h(x).
\end{equation}
Since $\tilde h$ is convex we have that
$h$
is $\mu$-strongly convex. We also have that
$\tilde f \in \mConv{n}$
is $\mu$-convex
and therefore
$f$ is convex.

Note that
there is $\tilde f'(x) \in \partial \tilde f(x)$
and 
$f'(x) \in \partial f(x)$
such that 
$f'(x)=\tilde f'(x)-\mu x$ and we can write
\begin{equation}\label{reformlincut}
\begin{array}{lcl}
\ell_f(u;x)+h(u) & = & f(x) + \langle f'(x),u-x\rangle +h(u) \\
& = & f(x) + \langle {\tilde f}'(x)-\mu x,u-x\rangle +\tilde h(u) +\frac{\mu}{2}\|u\|^2\\
& = & \tilde f(x)-\frac{\mu}{2}\|x\|^2 + \langle {\tilde f}'(x),u-x\rangle - \mu\langle x, u-x \rangle +\tilde h(u) +\frac{\mu}{2}\|u\|^2\\
&=& q_{\tilde f}(u;x)+\tilde h(u),
\end{array}
\end{equation}
which shows that the model written in terms
of quadratic cuts $q_{\tilde f}(u;x)$
can be written in terms
of linear cuts 
$\ell_f(u;x)$ 
written for $f$ instead
of $\tilde f$ but also
replacing $\tilde h$ by $h$, which is now $\mu$-convex. It follows  that
the QB update function
can be reformulated replacing
the inequalities in
\eqref{def:Gamma}
by 
$$
\tau \bar \Gamma(\cdot)  + (1-\tau) [\ell_f(u;x)+h(u)] \le \Gamma^+(\cdot) \le \phi(\cdot),
$$
with corresponding QCC method.
We see that we have transferred
the $\mu$-convexity from $\tilde f$ to
$h$ 
and the convexity of $\tilde h$
to $f$ for which linear cuts
can be computed.
Recall that 
the objective of composite
optimization problem \eqref{eq:ProbIntro1} is the sum
of a convex function $\tilde f$ which is
approximated in QCC  by a model 
constructed from quadratic cuts
$q_{\tilde f}(\cdot;x)$ of $\tilde f$
and of convex function $\tilde h$.
This problem can also be expressed
as composite optimization problem
\eqref{reformcomposite}
with objective which is
the sum of a $\mu$-convex
function $h$ and
convex function $f$.

From our previous observations, for iteration $j=1$, we can replace
condition \eqref{ineq:require} 
by
		$$
		    \Gamma_{1}\in \bConv{n} \mbox{ and } \ell_f(\cdot;x_{0})+h \le \Gamma_{1}\le  \phi.
		$$

Using once again \eqref{reformlincut},
for an iteration $j \geq 2$, the model
  $\Gamma_j$ in QCC can  also be obtained using a
  Linear Update function LU$(\Gamma_{j-1},x_{j-1},\tau)$ given below, depending
  on three inputs: the previous model
  $\Gamma_{j-1}$, the previous approximate
  solution $x_{j-1}$, and a parameter
  $\tau \in (0,1)$. 
\\
\noindent\rule[0.5ex]{1\columnwidth}{1pt}
	
	LU$(\Gamma,x,\tau)$

    \noindent\rule[0.5ex]{1\columnwidth}{1pt}
    {\bf Input:} Let $\tau \in (0,1)$ and
    $(x,\Gamma) \in \R^n \times \bConv{n}$ such that $\Gamma\le \phi$.

\noindent Find function $\Gamma^+$ such that
\begin{equation}\label{def:Gammal}
	    \Gamma^+ \in \bConv{n}, \qquad \tau \bar \Gamma(\cdot)  + (1-\tau) [\ell_f(\cdot;x)+h(\cdot)] \le \Gamma^+(\cdot) \le \phi(\cdot),
	\end{equation}
   and where $\bar \Gamma(\cdot) $ is such that
\begin{equation}\label{def:bar Gammal}
	\bar \Gamma \in \bConv{n}, \quad
 \bar \Gamma(x) = \Gamma(x), \quad 
	x = \underset{u\in \R^n}\argmin \;\bar \Gamma (u).
	\end{equation}
     {\bf Output:} $\Gamma^+$.
     
    \noindent\rule[0.5ex]{1\columnwidth}{1pt}

	We now describe three concrete
 update schemes (L1), (L2), and (L3) which are special ways
 of implementing LU function. 
 They are the analogues of Examples (E1), (E2), and
 (E3) based on quadratic cuts.
	\begin{itemize} 
     \item [(L1)] {\bf One-cut scheme}: 
    	    This scheme obtains $\Gamma^+$ as
	    \begin{equation}\label{eq:affine1}
        \Gamma^+= \Gamma^+_\tau := \tau \Gamma + (1-\tau) [\ell_f(\cdot;x)+h]
        \end{equation}
        where $x = \underset{u\in \R^n}\argmin \;\Gamma (u)$ and $\tau\in (0,1)$.  Clearly, $\Gamma^+$ satisfies 
        \eqref{def:Gammal} 
        with $\bar \Gamma = \Gamma$ satisfying
        \eqref{def:bar Gammal}. 
    Also, if
        $\Gamma$ is the sum of $h$ and
        an affine function underneath $f$,
        then so is $\Gamma^+$.	   
Such is the case if the first model 
$\Gamma_1$ is chosen to be 
$\ell_f(\cdot;x_0)+h$
     in which case
     $\Gamma_j$ is of the form
	\begin{equation}\label{eq:Gamma-form1}
	    \Gamma_j(\cdot) = \sum_{i=0}^{j-1} \alpha_i \ell_f(\cdot;x_i)  + h(\cdot)
	\end{equation}
	where 
  $\{\alpha_0,\ldots,\alpha_{j-1}\} \subset \R_{++}$ are scalars such that
	$\displaystyle \sum_{i=0}^{j-1} \alpha_i =1$.
        \item [(L2)] \textbf{Two-cuts scheme:} 
        For this scheme, it is assumed that
        $\Gamma$
        has the form 
        \begin{equation}\label{def:Gamma-E2l}
            \Gamma=\max \{A_f,\ell_f(\cdot;x^-)\}+h
        \end{equation}
        where $h \in \mConv{n}$ and
        $A_f$ is an affine function satisfying $A_f\le f$. In view of \
        $$
        x= \underset{u\in \R^n}\argmin \;\Gamma (u)=\underset{u\in \R^n}\argmin \max \{A_f(u),\ell_f(u;x^-)\}+h(u),
        $$
        there exists
        $\theta \in [0,1]$ such that
        \begin{align}
            &0 \in    \partial h(x) 
+ \theta \nabla A_f + (1-\theta) f'(x^-), \label{theta1l} \\
            &\theta A_f(x) + (1-\theta) \ell_f(x;x^-) = \max \{A_f(x),\ell_f(x;x^-)\}. \label{theta2l}
        \end{align}
        The scheme then sets
        \begin{equation}\label{def:Af+l}
         A_f^+(\cdot) :=  \theta A_f(\cdot) + (1-\theta) \ell_f(\cdot;x^-)
     \end{equation}
     and outputs the function $\Gamma^+$
      defined as
\begin{equation}\label{eq:G-agg2}
		    \Gamma^+(\cdot)  := 
		    \max\{A_f^+(\cdot),\ell_f(\cdot;x)\} + h(\cdot).
	    \end{equation}
     Same as for Example (E2), we can show that this model
     $\Gamma^+$ satisfies \eqref{def:Gammal} 
        with $\bar \Gamma = A_f^+ + h$ satisfying
        \eqref{def:bar Gammal}. 
        \item [(L3)] {\textbf{Multiple-cuts scheme:}} For this scheme, it is assumed that $\Gamma$ is of the form
	    $\Gamma=\Gamma(\cdot;B)$ where
	    $B \subset \R^n$ is a finite set and $\Gamma(\cdot;B)$ is defined by
$$
\Gamma(\cdot;B)=
\max(\ell_f(\cdot;b):b \in B)+h(\cdot).
$$
This scheme 
      chooses the next set $B^+$ so that
	    \begin{equation}\label{eq:C+l}
        B(x) \cup \{x\} \subset B^+ \subset B \cup \{x\}
        \end{equation}
        where
        \begin{equation}\label{def:C+l}
            B(x) := \{ b \in B : \ell_f(x;b)+h(x) = \Gamma(x) \},
        \end{equation}
         and then
	    outputs $\Gamma^+ = \Gamma(\cdot;B^+)$. Same as Example
     (E3), we can  show that this model
     $\Gamma^+$ satisfies \eqref{def:Gammal} with $\bar \Gamma =\Gamma(\cdot;B(x))$ satisfying
        \eqref{def:bar Gammal}. 
        A special case of the multiple-cut
scheme takes $B^{+}=B \cup \{x\}$
and amounts to apply a cutting
plane method for $f$ while
keeping the strongly convex
function $h$ in the objective.

	\end{itemize}

We are now in a position to describe a reformulation of the QCC framework.
	
	\noindent\rule[0.5ex]{1\columnwidth}{1pt}
	
	QCC based on model update function LU (instead of QU)
	
	\noindent\rule[0.5ex]{1\columnwidth}{1pt}
	\begin{itemize}
		\item [0.] Let $ x_0\in \dom h $, $ \bar \varepsilon>0 $ and $\tau \in (0,1)$ be given.
Set  $y_0=x_0$ and $j=1$.
		\item[1.] if $j=1$ then find $\Gamma_{j}$ such that
		\begin{equation}\label{ineq:requirel}
		    \Gamma_{j}\in \bConv{n}, \quad \ell_f(\cdot;x_{j-1})+h \le \Gamma_{j}\le  \phi;
		\end{equation}
		else, let 
  $\Gamma_{j}=$LU$(\Gamma_{j-1},x_{j-1},\tau)$;
		
	    \item[2.]
	    compute 
		\begin{equation}
	    x_{j} =\underset{u\in \R^n}\argmin \;
	     \Gamma_{j}(u),  \label{def:xjl}
	    \end{equation}
		 choose 
   \begin{equation}\label{eq:yj-intuitionl}
    y_j \in \Argmin \left\lbrace \phi(x) :
		x \in \{x_0,x_1,\ldots,x_j\}
		\right\rbrace, 
\end{equation}
		and set  
		\begin{equation}\label{ineq:hpe1l}
		    m_{j}=\Gamma_{j}(x_{j}), \qquad t_{j} = \phi(y_{j}) - m_{j};
		\end{equation}
		\item[3.] set $j\leftarrow j+1$ and go to Step 1.
	\end{itemize}
	\rule[0.5ex]{1\columnwidth}{1pt}








\subsubsection{Complexity analysis}

Our complexity analysis of QCC 
is based on the reformulation of
QCC given in Section \ref{reformQCC}
that uses model update function LU
and functions $f$ and $h$ given by
\eqref{defftildehtilde}.

We assume that the following conditions
hold
	for some triple
	$(L_f, M_f,\mu) \in \R_+^3 $:
	\begin{itemize}
		\item[(A1)]
		$f \in \bConv{n}$ and $h \in \mConv{n}$ are such that
		$\dom h \subset \dom f$, and a subgradient oracle, 		 i.e.,
		a function $f':\dom h \to \R^n$
		satisfying $f'(x) \in \partial f(x)$ for every $x \in \dom h$, is available;
		\item[(A2)]
		the set of optimal solutions $X^*$ of
		problem \eqref{eq:ProbIntro1} is nonempty;
		\item[(A3)]
		for every $u,v \in \dom h$,
		\begin{equation}
		\|f'(u)-f'(v)\| \le 2M_f + L_f \|u-v\|.
		\end{equation}
	\end{itemize}

We now add a few remarks about assumptions (A1)-(A3).
	First, the set $\Omega \subset \R^2_+$ consisting of
	 the pairs $(M_f,L_f)$
	satisfying (A3) is a nonempty closed convex set. Moreover,
	for a given tolerance
	$\bar \varepsilon>0$, 
	there exists a unique
	pair $(\bar M_f(\bar \varepsilon),\bar L_f(\bar \varepsilon))$ which minimizes
	$M_f^2+\bar \varepsilon L_f$ over $\Omega$ and,
	without any loss of clarity,
	we denote this pair simply by
	$(\bar M_f,\bar L_f)$ and
	define
		\begin{equation}\label{def:T}
	    T_{\bar \varepsilon} := \left( \bar M_f^2+\bar \varepsilon \bar L_f \right)^{1/2}.
	\end{equation}
    Second, it is well-known that (A3) implies that for every $u,v \in \dom h$,
	\begin{equation}\label{ineq:est}
	f(u)-\ell_f(u;v) \le 2M_f \|u-v\| + \frac{L_f}{2}\|u-v\|^2.
	\end{equation}

In addition to the above quantities, we introduce
the
	distance of a given initial
	point $x_0$ to $X^*$:
\begin{equation}\label{def:d0}
	d_0 := \|x_0-x_0^*\|, \ \ \mbox{\rm where} \ \ \ 
	x_0^* := \argmin \{\|x_0-x^*\|: x^*\in X^*\}.
\end{equation}

		 Finally,
for given initial point $x_0 \in \dom h$
and tolerance $\bar \varepsilon>0$, it is said that an algorithm for solving \eqref{eq:ProbIntro1}
has $\bar \varepsilon$-iteration complexity ${\cal O}(N)$ if its total number of iterations until it obtains a $ \bar \varepsilon $-solution
 is bounded by 
 $C(N+1)$ where $C>0$ is a
universal constant. 
 
		The first result below describes some
 basic properties of a
 sequence of auxiliary functions $\{\bar \Gamma_j\}$ whose existence is guaranteed by the nature of the LU blackbox.
	
	\begin{lemma}\label{lem:101}
	For every $j \geq 1$, the following statements hold:
	    \begin{itemize}
	        \item[a)] there exists function $\bar \Gamma_{j}(\cdot)$ such that
	        \begin{align}
	            &\tau \bar \Gamma_{j}(\cdot) + (1-\tau) [\ell_f(\cdot;x_j)+h(\cdot)] \le \Gamma_{j+1}(\cdot) \le \phi(\cdot), \label{eq:Gamma_j} \\
	            &\bar \Gamma_{j} \in \mConv{n}, \quad \bar \Gamma_{j}(x_j) = \Gamma_{j}(x_j), \quad 
	        x_j = \underset{u\in \R^n}\argmin \; \bar \Gamma_{j} (u); \label{eq:relation}
	        \end{align} 
	        \item[b)] for every $u\in \R^n$, we have
       \begin{equation}\label{ineq:Gammaj}
           \bar \Gamma_{j}(u) \ge m_j+ \frac{\mu}{2}\|u-x_j\|^2 .
       \end{equation}
	    \end{itemize}
	\end{lemma}
	\begin{proof}
	    a) This statement immediately follows from \eqref{def:Gammal}, \eqref{def:bar Gammal}, and the facts that $\Gamma_{j+1}$ is the output of the MU black-box with input $(\Gamma_{j},x_{j},\tau)$.
	    
	    b) It follows from $\bar \Gamma_{j}\in \mConv{n}$
 that $\bar \Gamma_{j}$ is $\mu$-strongly convex. 
 Thus, for every $u\in\dom h$,
	    $\bar \Gamma_{j}(u)  \ge \bar \Gamma_{j}(x_j) + \frac{\mu}{2}\|u-x_j\|^2$.
     Then, using the second identity in \eqref{eq:relation} and the definition of $m_j$ in~\eqref{ineq:hpe1l}, we get 
     $\bar \Gamma_{j}(x_j) = \Gamma_{j}(x_j)= m_j$ which ends the proof.
	\end{proof}
	
	\par The following technical result provides an important recursive formula for sequence $\{m_j\}$
	which is used in Lemma \ref{lem:tj} to give
	a recursive formula for 
 sequence
 $\{t_j\}$.
	It is worth observing that its proof
	uses for the first time the condition
	 \eqref{rel:tau1}.

	\begin{lemma}\label{lem:recur}
		Suppose $\tau$ in QCC satisfies
         \begin{equation}\label{rel:tau1} 
	        \frac{\tau}{1-\tau} \ge \frac{8 T_{\bar \varepsilon}^2}{\mu\bar \varepsilon}
	    \end{equation}
		where $T_{\bar \varepsilon}$ is as in \eqref{def:T}.
  Then for every 
  $j \geq 1$, 
        we have 
\begin{equation}\label{ineq:mj1}
		    m_{j+1} \ge \tau m_j + (1-\tau) \left[ \ell_f(x_{j+1};x_j) + h(x_{j+1}) + \left(\frac{\bar L_f}2+\frac{4\bar M_f^2}{\bar \varepsilon}\right) \|x_{j+1}-x_j\|^2 \right].
		\end{equation}
	\end{lemma}
    
	\begin{proof}
First, it immediately follows from the assumption over $\tau$ in~\eqref{rel:tau1} and the definition of $T_{\bar \varepsilon}$  in~\eqref{def:T}, that	    \begin{equation}\label{rel:tau2} 
	        \frac{\tau}{1-\tau} \ge \frac{8(\bar M_f^2 + \bar \varepsilon \bar L_f)}{\mu \bar \varepsilon} \ge \frac{1}{\mu} \left(\bar L_f + \frac{8\bar M_f^2}{\bar \varepsilon}\right).
	    \end{equation}
		Using the lower-bound hypothesis on $\Gamma_j$~\eqref{eq:Gamma_j}, the fact that $\tau<1$, and \eqref{ineq:Gammaj} with $u=x_{j+1}$, we have
		\begin{align*}
		m_{j+1} & \overset{\eqref{ineq:hpe1l}}{=} \Gamma_{j+1}(x_{j+1}) \\
		&\overset{\eqref{eq:Gamma_j}}{\ge} (1-\tau) [\ell_f(x_{j+1};x_j) + h(x_{j+1})] + \tau  \bar \Gamma_{j}(x_{j+1}) \\
		& \overset{\eqref{ineq:Gammaj}}{\ge} (1-\tau) [\ell_f(x_{j+1};x_j) + h(x_{j+1})] + \tau \left( m_j + \frac{\mu}{2} \|x_{j+1} -x_j\|^2 \right)\\
        & =  \tau m_j + (1-\tau)  
        \left[ \ell_f(x_{j+1};x_j) + h(x_{j+1}) + \frac{\tau}{1-\tau} \frac{\mu}{2} \|x_{j+1} -x_j\|^2\right]\\
        & \overset{\eqref{rel:tau2}}{\ge} \tau m_j + (1-\tau)  
        \left[ \ell_f(x_{j+1};x_j) + h(x_{j+1}) + \left(\frac{\bar L_f}2+\frac{4\bar M_f^2}{\bar \varepsilon}\right) \|x_{j+1} -x_j\|^2\right]
		\end{align*}
		which ends the proof.
	\end{proof}
\begin{rem} We could improve
the right-hand side of \eqref{ineq:mj1}
replacing $\frac{\bar L_f}2$ by
$4 \bar L_f$. We forced the term
$\frac{\bar L_f}2$ instead in the right-hand side since it will appear naturally
in subsequent computations. 
\end{rem}
 
	The next result,
 which
	plays an important role in the analysis,
 establishes
	a key recursive formula for the sequence $\{t_j\}$ defined in \eqref{ineq:hpe1l}.
	
	\begin{lemma}\label{lem:tj}
	    For every  $j \geq 1$, we have
\begin{equation}\label{ineq:tj-recur}
         t_{j+1}-\frac{\bar \varepsilon}4 \le \tau \left(t_j -\frac{\bar \varepsilon}4\right).
     \end{equation}
	\end{lemma}
	
	\begin{proof} 
        Using \eqref{ineq:est} with $(M_f,L_f,u,v)=(\bar M_f, \bar L_f, x_{j+1},x_j)$ we get 
        \begin{equation} \ell_f(x_{j+1};x_j)  + \frac{\bar L_f}2 \|x_{j+1}-x_j\|^2 + 2 \bar M_f \|x_{j+1}-x_j\| \ge  f(x_{j+1}), \end{equation}
        and the fact that $\phi=f+h$,  yields
        \begin{equation}\label{ineq:ellphi}
            \ell_f(x_{j+1};x_j) + h(x_{j+1}) + \frac{\bar L_f}2 \|x_{j+1}-x_j\|^2\ge \phi(x_{j+1}) - 2 \bar M_f \|x_{j+1}-x_j\|.
        \end{equation}
        This inequality and \eqref{ineq:mj1} imply that
		\begin{align}
			m_{j+1} - \tau m_j
			&\overset{\eqref{ineq:mj1}}{\ge} (1-\tau) \left[ \ell_f(x_{j+1};x_j) + h(x_{j+1}) +   \left(\frac{\bar L_f}2 +\frac{4\bar M_f^2}{\bar \varepsilon}\right) \|x_{j+1}-x_j\|^2 \right] \nn \\
			&\overset{\eqref{ineq:ellphi}}{\ge} (1-\tau) \left[ \phi(x_{j+1}) - 2 \bar M_f \|x_{j+1}-x_j\| +  \frac{4\bar M_f^2}{\bar \varepsilon} \|x_{j+1}-x_j\|^2 \right] \nn\\
   & = (1-\tau) \phi(x_{j+1}) + \frac{1-\tau}{\bar \varepsilon}\left(4\bar M_f^2\|x_{j+1} -x_j\|^2 - 2\bar M_f \bar \varepsilon\|x_{j+1} -x_j\|\right) \nn \\
			&\ge   (1-\tau) \phi(x_{j+1}) -  \frac{(1-\tau)\bar \varepsilon}{4}, \label{ineq:mj2}
		\end{align}		
		where the last inequality is due to the inequality
		$ a^2-2ab \ge - b^2$ with $a=2\bar M_f\|x_{j+1}-x_j\|$ and $b=\bar \varepsilon/2$.
		Using the above inequality and the definitions of $y_{j+1}$ and $ t_{j+1} $ in \eqref{eq:yj-intuitionl} and \eqref{ineq:hpe1l}, respectively, we conclude that
        \begin{align*}
            t_{j+1} &\overset{\eqref{ineq:hpe1l}}{=} \phi(y_{j+1}) - m_{j+1} \overset{\eqref{ineq:mj2}}{\le} \phi(y_{j+1}) -\tau m_j - (1-\tau) \phi(x_{j+1}) + \frac{(1-\tau)\bar \varepsilon}{4} \\
            &\overset{\eqref{ineq:hpe1l}}{=} \phi(y_{j+1}) -\tau [\phi(y_j)-t_j] - (1-\tau) \phi(x_{j+1}) + \frac{(1-\tau)\bar \varepsilon}{4} \\
            &\overset{\eqref{eq:yj-intuitionl}}{\le} \tau t_j + \frac{(1-\tau)\bar \varepsilon}{4},
        \end{align*}
  and that the lemma holds.
	\end{proof}
	

	

	The next lemma gives a uniform bound on $t_{1}$.
    \begin{lemma}\label{lem:t1} 
    Assume that the domain of $h$
    is bounded with diameter at most 
    $D$. Then we have $ t_{1}\le \bar t$
		where	\begin{equation}\label{def:bar t}
		    \bar t:= \bar M_f^2 + 
       \left( \frac{{\bar L}_f}{2} +1\right) D^2.
		\end{equation}
	\end{lemma}
	
	\begin{proof}
	    Using relation $\phi=f+h$ and $ \Gamma_{1}\ge \ell_f(\cdot;x_{0})+h $, we have
	    \begin{align*}
	        t_{1} &\overset{\eqref{ineq:hpe1l}}{=} \phi(y_{1}) - m_{1}
         \leq 
\phi(x_{1}) - \Gamma_{1}(x_{1})
	        \le f(x_{1}) - \ell_f(x_{1};x_{0}) \\
	        & \le  2\bar M_f \|x_{1}-x_{0}\| + \frac{\bar L_f}{2} \|x_{1}-x_{0}\|^2
	        \le \bar M_f^2 + \left( \frac{\bar L_f}{2}+1\right)\|x_{1}-x_{0}\|^2
	    \end{align*}
		where the third inequality is due to \eqref{ineq:est} with $(M_f, L_f, u,v)=(\bar M_f, \bar L_f, x_{1}, x_{0})$, and the last inequality is due to the fact that $2ab \le a^2+b^2$ for every $a,b \in \R$.
	\end{proof}

 The next theorem gives the
 complexity of QCC.
    	\begin{theorem}[Complexity of QCC]\label{complcssc} 
Consider QCC framework with $\tau$ given
by 
\begin{equation}\label{partchtau}
\frac{1}{\tau}=1+\frac{\mu \bar \varepsilon}{8 T^2_{\bar \varepsilon}}=1+\frac{\mu \bar \varepsilon}{8(\bar M_f^2+\bar \varepsilon \bar L_f)}. 
\end{equation}
    Assume that the domain of $h$
    is bounded with diameter $D>0$. 
    Then if 
\begin{equation}\label{condj}
j \geq 1 + \left[1+ \frac{8\left( \bar M_f^2+\bar \varepsilon \bar L_f \right)}{\mu \bar \varepsilon}
\right]\log \left( \frac{4 \bar t}{3 \bar \varepsilon}  \right)
\end{equation}
where $\bar t$ is given in Lemma \ref{lem:t1},
we have $t_j \leq \bar \varepsilon$ and
QCC finds an $\bar \varepsilon$-optimal solution
of \eqref{eq:ProbIntro1} in at most 
$$
1 + \left[1+ \frac{8\left( \bar M_f^2+\bar \varepsilon \bar L_f \right)  }{\mu \bar \varepsilon}
\right]\log \left( \frac{4 \bar t}{3 \bar \varepsilon}  \right)
$$
iterations.
	\end{theorem}
	\begin{proof}
 Using Lemma \ref{lem:tj}
and the relation $\tau \leq e^{\tau-1}$, we get
for every $j \geq 1$:

\begin{align}
    t_j  & \leq  \frac{\bar \varepsilon}{4} +\tau^{j-1}(t_1-\frac{\bar \varepsilon}{4})
    & \text{induction on \eqref{ineq:tj-recur}} \nonumber\\
 & \leq \frac{\bar \varepsilon}{4} +\tau^{j-1} \bar t & \text{by Lemma \ref{lem:t1}} \nonumber\\
 & \leq \frac{\bar \varepsilon}{4}
 +e^{(\tau-1)(j-1)} \bar t & \text{since } \tau \leq e^{\tau-1}.\label{tjfinal}
\end{align}
Further, 
$$ \tau-1 = \frac{- \frac{\mu \bar \varepsilon}{8T^2_{\bar \varepsilon}}}{1+\frac{\mu \bar \varepsilon}{8T^2_{\bar \varepsilon}}}
= - \frac{1}{1+\frac{8T^2_{\bar \varepsilon}}{\mu \bar \varepsilon}}.$$
So, for $j \geq 1+\left(1 + \frac{8T^2_{\bar \varepsilon}}{\mu \bar \varepsilon}\right)\log(4\bar t / (3 \bar \varepsilon))$, we have 
$
(j-1)(\tau-1) \leq \log(3 \bar \varepsilon/4 \bar t)
$ or equivalently
$\exp((\tau-1)(j-1)) \leq 3 \bar \varepsilon/(4\bar t)$, which plugged into
\eqref{tjfinal} gives
$ t_j \leq \frac{\bar \varepsilon}{4} + \frac{3\bar \varepsilon}{4} = \bar \varepsilon$.

\if{
\oldvl{
 $$
 t_j \leq \frac{\varepsilon}{4}
 +\tau^{j-1}(t_1-\frac{\varepsilon}{4})
 \leq \frac{\varepsilon}{4}
 +\tau^{j-1} \bar t
 \leq \frac{\varepsilon}{4}
 +e^{(\tau-1)(j-1)} \bar t 
 \leq \varepsilon
 $$
where the last inequality follows from
relation \eqref{condj}
and the choice of $\tau$
given by \eqref{partchtau}.}

}\fi

Finally,
$$
0 \leq \phi(y_j)-\phi_* \leq 
\phi(y_j)-\Gamma_j(x_j) =t_j \leq \bar \varepsilon.
$$
and $y_j$ is an $\bar \varepsilon$-optimal solution.
\end{proof}

\if{

\section{QCC in terms of quadratic cuts and application}

 In this section, we provide a reformulation of
 QCC in terms of linear (instead of quadratic in the
 original QCC formulation) cuts and discuss an application.
Define $f$ and $h$ by the relations
\begin{equation}\label{deffh}
f(x)=\tilde f(x)-\frac{\mu}{2}\|x\|^2 \mbox{ and }
h(x)=\tilde h(x)+\frac{\mu}{2}\|x\|^2.
\end{equation}

Observe that problem \eqref{eq:ProbIntro1}
can be written
\begin{equation}\label{reformcomposite}
\min_{x \in \mathbb{R}^n} \;f(x) + h(x).
\end{equation}
Since $\tilde f$ is $\mu$-convex we have that
$f \in  {\overline{\mbox{Conv}}(\mathbb{R}^n)}$
is convex. We also have that
$h \in \mConv{n}$
is $\mu$-convex. Note that
there is $\tilde f'(x) \in \partial \tilde f(x)$
such that $\tilde f'(x)=f'(x)+\mu x$ and
recall that the quadratic
approximation 
$$
q_{\tilde f}(u;x) =\tilde f(x)
+\langle \tilde f'(x),u-x\rangle + \frac{\mu}{2}\|u-x\|^2
$$
of $\tilde f$ at $x$ satisfies 
\begin{equation}\label{quadminorant}
\tilde f(u) \geq q_{\tilde f}(u;x)
\end{equation}
for every $u \in \mathbb{R}^n$.
Next, the term $\ell_f(u;x)+h(u)$
in \eqref{reformmodu} can be written
\begin{equation}\label{reformlincut}
\begin{array}{lcl}
\ell_f(u;x)+h(u) & = & f(x) + \langle f'(x),u-x\rangle +h(u) \\
& = & f(x) + \langle {\tilde f}'(x)-\mu x,u-x\rangle +\tilde h(u) +\frac{\mu}{2}\|u\|^2\\
& = & \tilde f(x)-\frac{\mu}{2}\|x\|^2 + \langle {\tilde f}'(x),u-x\rangle - \mu\langle x, u-x \rangle +\tilde h(u) +\frac{\mu}{2}\|u\|^2\\
&=& q_{\tilde f}(u;x)+\tilde h(u),
\end{array}
\end{equation}
which shows that the model can be written in terms
of quadratic cuts $q_{\tilde f}(u;x)$
computed at $x$ and  replacing $h$ by
$\tilde h$. It follows  that
the MU model update blackbox
can be reformulated replacing
the inequalities in
\eqref{def:Gamma}
by 
$$
\tau \bar \Gamma(\cdot)  + (1-\tau) [q_{\tilde f}(u;x)+\tilde h(u)] \le \Gamma^+(\cdot) \le \phi(\cdot),
$$
with corresponding QCC method.
We see that we have transferred
the $\mu$-convexity from $h$ to
$\tilde f$ for which quadratic cuts can be computed. Recall that 
the objective of composite
optimization problem \eqref{eq:ProbIntro1} is the sum
of a $\mu$-convex function $f$ which is
approximated in QCC  by a model $\Gamma_j$
constructed from quadratic cuts
$q_{\tilde f}(\cdot;x)$ of $\tilde f$
and of convex function $\tilde h$.
This problem can also be expressed
as composite optimization problem
\eqref{reformcomposite}
with objective which is
the sum of a $\mu$-convex
function $\tilde f$ and
convex function $\tilde h$.
Therefore, QCC
could also in theory be applied
building a model for
$\tilde h$ (the convex, not necessarily $\mu$-convex for some $\mu>0$) and keeping $\tilde f$ (the $\mu$-convex function)
as it is. However, in practice,
a model will be built for the
"most complicated" function.

We can now express the three particular
models (E1), (E2), and (E3) from
Section \ref{subsec:update} in terms
of quadratic cuts. 

	\begin{itemize} 
     \item [(E1)] {\bf One-cut scheme}:
     From \eqref{reformlincut}, we
     see that one-cut scheme (E1)
   obtained taking at the first
   iteration model 
$\Gamma_1$ given by 
$q_{\tilde f}(\cdot;x_0)+\tilde h$
    provides at iteration $j$
    of QCC a model
     $\Gamma_j$ of the form
	\begin{equation}\label{eq:Gamma-formrewrite}
	    \Gamma_j(\cdot) = \sum_{i=0}^{j-1} \alpha_i q_{\tilde f}(\cdot;x_i)  + {\tilde h}(\cdot)
	\end{equation}
which is $\tilde h$ plus a quadratic
model which is a convex combination of
the quadratic cuts computed up to
iteration $j$.
        \item [(E2)] \textbf{Two-cuts scheme:} For
        scheme (E2), model $\Gamma$
        has the form 
        \begin{equation}\label{def:Gamma-E2rew}
            \Gamma=\max \{Q_f,q_{\tilde f}(\cdot;x^-)\}+{\tilde h}
        \end{equation}
        where 
        $Q_f$ is a quadratic function satisfying $Q_f\le f$.
 \item [(E3)] {\textbf{Multiple-cuts scheme:} For this scheme, $\Gamma$ is of the form
	    $\Gamma=\Gamma(\cdot;B)$ where
	    $B \subset \R^n$ is a finite set and $\Gamma(\cdot;B)$ is defined by
$$
\Gamma(\cdot;B)=
\max(q_{\tilde f}(\cdot;b):b \in B)+{\tilde h}(\cdot).
$$
This scheme 
      chooses the next set $B^+$ so that
	    \begin{equation}\label{eq:C+}
        B(x) \cup \{x\} \subset B^+ \subset B \cup \{x\}
        \end{equation}
        where
        \begin{equation}\label{def:C+}
            B(x) := \{ b \in B : q_{\tilde f}(x;b)+{\tilde h}(x) = \Gamma(x) \}.
        \end{equation}
\end{itemize}	

}\fi

\section{Bundle methods based on quadratic cuts}\label{sec:qb}

In section, we are still interested in solving problem \eqref{eq:ProbIntro1}.
We consider an extension of the bundle methods from
\cite{montliang23} where linearizations of the 
"complicate" function in the objective are replaced
by quadratic cuts. We call QB (Quadratic Bundle) the corresponding
method.
An iteration of QB solves the prox bundle problem
\begin{equation}\label{defx}
x =  \argmin_{u \in \mathbb{R}^n}\;
\Gamma(u)+\frac{1}{2 \lambda}\|u-x^c\|^2
\end{equation}
where $\lambda$ is the prox stepsize, $x^c$ is the prox-center,
and $\Gamma$ is a model for $\tilde f + \tilde h$.
The method has two types of iterations: serious and null ones. In a serious
iteration, the prox-center is updated to $x^c \leftarrow x$
and the updated bundle function $\Gamma^{+}$
satisfies $\Gamma^{+} \geq q_{\tilde f}(;x)+\tilde h$
where $q_{\tilde f}(;x)$
is the quadratic approximation
of $\tilde f$ given by
\eqref{quadapprox}.

In a null iteration, the prox-center
does not change and $\Gamma$ is updated 
using some bundle update procedure.

The method is based on the Quadratic Bundle Update (QBU) blackbox
given below, depending on a parameter $\tau \in (0,1)$
and prox-step size $\lambda$.

\noindent\rule[0.5ex]{1\columnwidth}{1pt}
	
 QBU$(\Gamma,x,\tau,x^c)$

    \noindent\rule[0.5ex]{1\columnwidth}{1pt}
    {\bf Input:} Let $\lambda>0$, $\tau \in (0,1)$,
    $x^c, x \in \mathbb{R}^n$, and 
    $\Gamma \in \mConv{n}$ such that $\Gamma\le \phi$
and \eqref{defx} holds.

\noindent Find function $\Gamma^+$ such that
\begin{equation}\label{def:Gamma2}
	    \Gamma^+ \in \mConv{n}, \qquad \tau \bar \Gamma(\cdot)  + (1-\tau) [q_{\tilde f}(\cdot;x)+\tilde h(\cdot)] \le \Gamma^+(\cdot) \le \phi(\cdot),
	\end{equation}
   and where $\bar \Gamma(\cdot) $ is such that
\begin{equation}\label{def:bar Gamma2}
	\bar \Gamma \in \mConv{n}, \quad
 \bar \Gamma(x) = \Gamma(x), \quad 
 x =  \argmin_{u \in \mathbb{R}^n}\;
\bar \Gamma(u)+\frac{1}{2 \lambda}\|u-x^c\|^2.
	\end{equation}
     {\bf Output:} $\Gamma^+$.
     
    \noindent\rule[0.5ex]{1\columnwidth}{1pt}

Our quadratic bundle method QB uses blackbox
QBU and is given below.

\noindent\rule[0.5ex]{1\columnwidth}{1pt}
	
	QB
	
	\noindent\rule[0.5ex]{1\columnwidth}{1pt}
	\begin{itemize}
		\item[0.] Let $ x_0\in \dom h $, $\lambda >0$, $ \bar \varepsilon>0 $ and $\tau \in (0,1)$ satisfying
  \begin{equation}
\frac{\tau}{1-\tau} \geq \frac{8 \lambda T_{\bar \varepsilon}^2}{(1+\lambda \mu){\bar \varepsilon}}
  \end{equation}
  where $T_{\bar \varepsilon}$ is as in
  \eqref{def:T}. Set  $y_0=x_0$, $t_0=0$, and $j=1$.
\item[1.] if $t_{j-1} \leq {\bar \varepsilon}/2$
then perform a serious update, i.e., 
set 
$x_j^c=x_{j-1}$ and 
find $\Gamma_j$ such that
$$
\Gamma_{j} \in \mConv{n},\;\;q_{\tilde f}(;x_{j-1})+\tilde h \leq \Gamma_j \leq \phi,
$$
else, perform a null update, i.e., set
$x_j^c = x_{j-1}^c$ and let
$\Gamma_j$ be the output
of the QBU blackbox with inputs
$(\Gamma,x,\tau,x^c)=
(\Gamma_{j-1}, x_{j-1}, \tau, x_{j-1}^c)$;
\item[2.] compute
$$
x_j =  \argmin_{u \in \mathbb{R}^n}\;
\Gamma_j(u)+\frac{1}{2 \lambda}\|u-x_j^c\|^2,
$$
choose $y_j$ satisfying
$$
y_j \in \argmin \{\phi(x):x \in \{x_0,x_1,\ldots,x_j\}\},
$$
and set 
$$
m_j=\Gamma_j(x_j)+\frac{1}{2\lambda}\|x_j-x_j^c\|^2, \;t_j=\phi(y_j)-m_j;
$$
\item[3.] set $j \leftarrow j+1$ and go to step 1.
	\end{itemize}
	\rule[0.5ex]{1\columnwidth}{1pt}

Though QB and GPB use very different models
for the objective (the building blocks
are quadratic functions in QB while they are affine
functions in GPB), a rewriting of QB
as GPB applied to a reformulation of the problem allows us to
use the complexity analysis of GPB to obtain the
complexity analysis of QB.
This is based on the crucial observation that
the original optimization problem 
\eqref{eq:ProbIntro1} expressed in terms
of $\tilde f$ and $\tilde h$ can be reformulated
as problem \eqref{reformcomposite}
in terms of functions $f$ and $g$
given by \eqref{defftildehtilde}. Then inequality \eqref{def:Gamma2} can be 
rewritten
$$
\tau \bar \Gamma(\cdot)  + (1-\tau) [\ell_{f}(\cdot;x)+h(\cdot)] \le \Gamma^+(\cdot) \le \phi(\cdot)
$$
which shows that our QB method applied
to problem \eqref{eq:ProbIntro1} 
is the same as the GPB method from
\cite{montliang23}
applied to problem \eqref{reformcomposite}.
Therefore the complexity of QB is given by the complexity
of GPB, i.e., by Theorem 3.1 in \cite{montliang23}.
More precisely, fix 
$C>0$, $\bar \varepsilon>0$,
and assume that $f,h$
satisfy (A1)-(A3). Let $\lambda$ satisfy
$$
\frac{\bar \varepsilon}{C(M_f^2 + \bar \varepsilon L_f)}\leq \lambda \leq\frac{Cd_0^2}{\bar \varepsilon}
$$
and $\tau$ given by
$$
\tau=\left[1+\frac{(1+\lambda \mu)\bar \varepsilon}{8\lambda(M_f^2 + \bar \varepsilon L_f)}\right]^{-1}.
$$
Then QB finds an $\bar \varepsilon$-optimal solution
of \eqref{eq:ProbIntro1} in a number of iterations
bounded by 
$$
\mathcal{O}\left(\min\left\{ \frac{(M_f^2+\bar \varepsilon L_f)d_0^2}{\bar \varepsilon^2},\left(\frac{M_f^2+ \bar \varepsilon L_f}{\mu \bar \varepsilon}+1\right)\log\left(\frac{\mu d_0^2}{\bar \varepsilon}+1\right)\right\}+1\right).
$$

\section{Strong convexity of value functions}\label{sec:scvaluefunc}

Let $\|\cdot\|$ be a norm on $\mathbb{R}^m$
and let  $f: X \rightarrow \mathbb{R}$ be a function 
defined on a convex subset $X \subset \mathbb{R}^m$.
Recall that function $f$ is strongly convex on $X \subset \mathbb{R}^m$ 
with constant of strong convexity $\alpha > 0$  with respect to norm $\|\cdot\|$
iff
$$
f( t x  + (1-t)y ) \leq t f(x) + (1-t)f(y)  - \frac{\alpha t(1-t)}{2} \|y-x\|^2, 
$$
for all $0 \leq t \leq 1, x, y \in X$.

\begin{remark}
We have the following equivalent characterization of strongly convex functions.
Let $X \subset \mathbb{R}^m$ be a convex set.
Function $f: X \rightarrow \mathbb{R}$
is strongly convex on $X$ with constant of strong convexity $\alpha > 0$ 
with respect to norm $\|\cdot\|$
iff
\begin{equation}\label{characscf}
f(y) \geq f(x) + s^T (y-x)   + \frac{\alpha}{2}\|y-x\|^2, \;\forall x, y \in X, \forall s \in \partial f(x).
\end{equation}
This implies inequality \eqref{quadminorant}
used in QCC and QB.
\end{remark}

Let $X \subset \mathbb{R}^m$ and $Y \subset \mathbb{R}^n$ be two nonempty convex sets.
Let $\mathcal{A}$ be a $p \small{\times} n$ real matrix, let
$\mathcal{B}$ be a $p \small{\times} m$ real matrix, let $f: Y \small{\times} X  \rightarrow \mathbb{R}$,
and let $g:Y \small{\times} X  \rightarrow \mathbb{R}^q$.
For $b \in \mathbb{R}^p$, we define the value function
\begin{equation} \label{optclassgeneral0}
\mathcal{Q}(x)= \left\{
\begin{array}{l}
\inf f(y,x)\\
y \in \mathcal{S}(x):=\{y \in Y, \mathcal{A} y + \mathcal{B} x = b, g(y, x) \leq 0\}.
\end{array}
\right.
\end{equation}
DASC algorithm 
presented in Section \ref{sec:dasc} is based on Proposition  \ref{strongconvvfunc} below
giving conditions ensuring that $\mathcal{Q}$ is strongly convex:
\begin{prop}\label{strongconvvfunc} Consider value function $\mathcal{Q}$  given by \eqref{optclassgeneral0}.
Assume that (i) $X, Y$ are nonempty and convex sets such that $X \subseteq \mbox{dom}(\mathcal{Q})$ and
$Y$ is closed, (ii) $f, g$ are lower semicontinuous and the components $g_i$ of $g$ are convex functions.
If additionally $f$ is strongly convex on $Y \small{\times} X$ with constant of strong convexity
$\alpha$ with respect to norm $\|\cdot\|$ on $\mathbb{R}^{m+n}$, then
$\mathcal{Q}$ is strongly convex on $X$ with constant of strong convexity $\alpha$
with respect to norm $\|\cdot\|$ on $\mathbb{R}^{m}$. 
\end{prop}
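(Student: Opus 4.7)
The plan is to verify the defining strong-convexity inequality for $\mathcal{Q}$ directly, by producing a feasible ``convex combination'' of near-minimizers and then invoking strong convexity of $f$. Fix $x_1, x_2 \in X$ and $t \in [0,1]$, and set $x_t := t x_1 + (1-t) x_2$, which lies in $X$ by convexity. Because $X \subseteq \mathrm{dom}(\mathcal{Q})$, for each $\varepsilon>0$ I can pick $y_i \in \mathcal{S}(x_i)$ with $f(y_i,x_i) \leq \mathcal{Q}(x_i) + \varepsilon$ (working with $\varepsilon$-optimal solutions avoids any discussion of existence of exact minimizers).

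The first step is to show that $y_t := t y_1 + (1-t) y_2$ belongs to $\mathcal{S}(x_t)$. Membership $y_t \in Y$ is immediate from convexity of $Y$. The linear equality is preserved:
\[
\mathcal{A} y_t + \mathcal{B} x_t = t(\mathcal{A} y_1 + \mathcal{B} x_1) + (1-t)(\mathcal{A} y_2 + \mathcal{B} x_2) = b.
\]
Each inequality constraint holds because $g_i$ is convex on $Y \times X$: $g_i(y_t, x_t) \leq t\, g_i(y_1,x_1) + (1-t)\, g_i(y_2,x_2) \leq 0$. So $(y_t, x_t)$ is feasible and $\mathcal{Q}(x_t) \leq f(y_t, x_t)$.

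The second step is to chain the strong-convexity inequality for $f$. Applying the convex-combination form of strong convexity of $f$ on $Y \times X$ at the points $(y_1, x_1)$ and $(y_2, x_2)$ and using the $\varepsilon$-optimality of $y_i$, I get
\begin{align*}
\mathcal{Q}(x_t) &\leq f(y_t, x_t) \\
&\leq t f(y_1, x_1) + (1-t) f(y_2, x_2) - \tfrac{\alpha t(1-t)}{2}\|(y_2, x_2) - (y_1, x_1)\|^2 \\
&\leq t \mathcal{Q}(x_1) + (1-t) \mathcal{Q}(x_2) + \varepsilon - \tfrac{\alpha t(1-t)}{2}\|x_2 - x_1\|^2.
\end{align*}
Letting $\varepsilon \downarrow 0$ yields the desired inequality.

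The only delicate point — and the step I would write most carefully — is the last line, which silently uses $\|(y_2 - y_1, x_2 - x_1)\|_{\mathbb{R}^{m+n}} \geq \|x_2 - x_1\|_{\mathbb{R}^m}$. The statement reuses the symbol $\|\cdot\|$ for norms on two different spaces, so structurally the proof needs the compatibility that the $(m{+}n)$-norm dominates the $m$-norm on the $x$-component (as is the case for standard block norms such as $\|(u,v)\| = (\|u\|^p + \|v\|^p)^{1/p}$, $p \geq 1$); I would make this convention explicit at the start of the argument. Note also that lower semicontinuity of $f$ and $g$ plays no role in this particular chain — those hypotheses presumably surface elsewhere in the paper (e.g., in ensuring $\mathcal{Q}$ itself is lsc or that certain infima are attained).
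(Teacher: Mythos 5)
Your proof is correct and follows essentially the same route as the paper's: form the convex combination of (near-)minimizers, check its feasibility for $\mathcal{S}(t x_1 + (1-t) x_2)$ via convexity of $Y$, linearity of the equality constraint, and convexity of the $g_i$, then chain the strong-convexity inequality for $f$. The only difference is that the paper uses the lower semicontinuity of $f,g$ and closedness of $Y$ to obtain \emph{exact} minimizers $y_i$ with $\mathcal{Q}(x_i)=f(y_i,x_i)$, whereas your $\varepsilon$-optimal variant sidesteps attainment entirely (which is precisely why those hypotheses appeared unused to you); your remark that both arguments tacitly require $\|(y_2-y_1,\,x_2-x_1)\| \geq \|x_2-x_1\|$ for the two norms involved is accurate, and that step is indeed left implicit in the paper's final inequality as well.
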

\begin{proof} Take $x_1, x_2 \in X$ and $\varepsilon>0$.
Since $X \subseteq \mbox{dom}(\mathcal{Q})$ the sets $\mathcal{S}(x_1)$
and $\mathcal{S}(x_2)$ are nonempty.
Our assumptions imply that there are $y_1 \in \mathcal{S}(x_1)$ and  $y_2 \in \mathcal{S}(x_2)$ such that
$\mathcal{Q}( x_1 )\leq  f( y_1, x_1 ) \leq \mathcal{Q}( x_1 )+\varepsilon$ and 
$\mathcal{Q}( x_2 ) \leq f( y_2, x_2 ) \leq \mathcal{Q}( x_2 )+\varepsilon$. Then for every $0 \leq t \leq 1$, by convexity arguments
we have that $t y_1 + (1-t)y_2 \in \mathcal{S}(t x_1 + (1-t) x_2   )$ and therefore
$$
\begin{array}{lll}
\mathcal{Q}(  t x_1 + (1-t) x_2  ) & \leq & f( t y_1 + (1-t) y_2  , t x_1 + (1-t) x_2   )\\
& \leq & t f(y_1 , x_1) + (1-t) f( y_2, x_2 ) - \frac{1}{2} \alpha t (1-t) \| (y_2, x_2)  -   (y_1, x_1)   \|^2 \\
& \leq & t \mathcal{Q}( x_1 ) + (1-t) \mathcal{Q}(x_2) - \frac{1}{2} \alpha t (1-t) \| x_2  -  x_1   \|^2+\varepsilon.
\end{array}
$$
Passing to the limit when $\varepsilon\rightarrow 0$, we obtain
$$
\mathcal{Q}(  t x_1 + (1-t) x_2  ) \leq t \mathcal{Q}( x_1 ) + (1-t) \mathcal{Q}(x_2) - \frac{1}{2} \alpha t (1-t) \| x_2  -  x_1   \|^2,
$$
which completes the proof. \hfill
\end{proof}

\section{Multistage stochastic programs}\label{sec:pbformass}

We consider multistage stochastic optimization problems (MSPs) of the form
\begin{equation}\label{pbtosolve}
\begin{array}{l} 
\displaystyle{\inf_{x_1,\ldots,x_T}} \; \mathbb{E}_{\xi_2,\ldots,\xi_T}[ \displaystyle{\sum_{t=1}^{T}}\;f_t(x_t(\xi_1,\xi_2,\ldots,\xi_t), x_{t-1}(\xi_1,\xi_2,\ldots,\xi_{t-1}), \xi_t )]\\
x_t(\xi_1,\xi_2,\ldots,\xi_t) \in X_t( x_{t-1}(\xi_1,\xi_2,\ldots,\xi_{t-1}), \xi_t )\;\mbox{a.s.}, \;x_{t} \;\mathcal{F}_t\mbox{-measurable, }t=1,\ldots,T,
\end{array}
\end{equation}
where $x_0$ is given, $\xi_1$ is deterministic,  $(\xi_t)_{t=2}^T$ is a stochastic process, $\mathcal{F}_t$ is the sigma-algebra
$\mathcal{F}_t:=\sigma(\xi_j, j\leq t)$, and $X_t(x_{t-1}, \xi_t ),t=1,\ldots,T$, can be of two types:
\begin{itemize}
\item[(S1)] $X_t( x_{t-1}, \xi_t ) = \{x_t \in \mathbb{R}^n : x_t \in \mathcal{X}_t : x_t \geq 0,\;\;\displaystyle A_{t} x_{t} + B_{t} x_{t-1} = b_t \}$ (in this case, for short, we say that $X_t$ is of type S1); 
\item[(S2)] $X_t( x_{t-1} , \xi_t)= \{x_t \in \mathbb{R}^n : x_t \in \mathcal{X}_t,\;g_t(x_t, x_{t-1}, \xi_t) \leq 0,\;\;\displaystyle A_{t} x_{t} + B_{t} x_{t-1} = b_t \}$.
In this case, for short, we say that $X_t$ is of type S2.
\end{itemize}
For both kinds of constraints, $\xi_t$ contains in particular the random elements in matrices $A_t, B_t$, and vector $b_t$.
Note that a mix of these types of constraints is allowed: for instance we can have $X_1$ of type S1 and $X_2$ of type $S2$.\\

We make the following assumption on $(\xi_t)$:\\
\par (H0) $(\xi_t)$ 
is interstage independent and
for $t=2,\ldots,T$, $\xi_t$ is a random vector taking values in $\mathbb{R}^K$ with a discrete distribution and a
finite support $\Theta_t=\{\xi_{t 1}, \ldots, \xi_{t M}\}$ with $p_{t i}=\mathbb{P}(\xi_{t}=\xi_{t i})>0,i=1,\ldots,M$,
while $\xi_1$ is deterministic.\footnote{To alleviate notation and without loss of generality, we have assumed that the number $M$ of possible realizations
of $\xi_t$, the size $K$ of $\xi_t$, and $n$ of $x_t$ do not depend on $t$.}\\

We will denote by $A_{t j}, B_{t j},$ and $b_{t j}$ the realizations of respectively $A_t, B_t,$ and $b_t$
in $\xi_{t j}$. For this problem, we can write Dynamic Programming equations: assuming that $\xi_1$ is deterministic,
the first stage problem is 
\begin{equation}\label{firststodp}
\mathcal{Q}_1( x_0 ) = \left\{
\begin{array}{l}
\inf_{x_1 \in \mathbb{R}^n} F_1(x_1, x_0, \xi_1) := f_1(x_1, x_0, \xi_1)  + \mathcal{Q}_2 ( x_1 )\\
x_1 \in X_1( x_{0}, \xi_1 )\\
\end{array}
\right.
\end{equation}
for $x_0$ given and for $t=2,\ldots,T$, $\mathcal{Q}_t( x_{t-1} )= \mathbb{E}_{\xi_t}[ \mathfrak{Q}_t ( x_{t-1},  \xi_{t}  )  ]$ with
\begin{equation}\label{secondstodp} 
\mathfrak{Q}_t ( x_{t-1}, \xi_{t}  ) = 
\left\{ 
\begin{array}{l}
\inf_{x_t \in \mathbb{R}^n}  F_t(x_t, x_{t-1}, \xi_t ) :=  f_t ( x_t , x_{t-1}, \xi_t ) + \mathcal{Q}_{t+1} ( x_t )\\
x_t \in X_t ( x_{t-1}, \xi_t ),
\end{array}
\right.
\end{equation}
with the convention that $\mathcal{Q}_{T+1}$ is null.

We set $\mathcal{X}_0=\{x_0\}$ and make the following assumptions (H1) on the problem data: for $t=1,\ldots,T$,\\
\par (H1)-(a) for every $j=1,\ldots,M$, the function
$f_t(\cdot, \cdot,\xi_{t j})$ is strongly convex on  $\mathcal{X}_t \small{\times} \mathcal{X}_{t-1}$
with constant of strong convexity $\alpha_{t j}>0$ with respect to norm $\|\cdot\|_2$;
\par (H1)-(b) $\mathcal{X}_t$ is nonempty, convex, and compact;
\par (H1)-(c) there exists $\varepsilon_t>0$ such that for every $j=1,\ldots,M$, for every
$x_{t-1} \in \mathcal{X}_{t-1}^{\varepsilon_t}$,
the set $X_t(x_{t-1}, \xi_{t j}) \cap \mbox{ri}( \mathcal{X}_t)$ is nonempty.\\

If $X_t$ is of type $S2$ we additionally assume that:\\
\par (H1)-(d) for $t=1,\ldots,T$, there exists $\tilde \varepsilon_t>0$ such that for every $j=1,\ldots,M$, each component $g_{t i}(\cdot, \cdot, \xi_{t j}), i=1,\ldots,p$, of the function $g_t(\cdot, \cdot, \xi_{t j})$ is 
convex on $\mathcal{X}_t \small{\times} \mathcal{X}_{t-1}^{\tilde \varepsilon_t}$;
\par  (H1)-(e) for $t=2, \ldots, T$, $\forall j=1,\ldots,M$, 
there exists $({\bar x}_{t j t-1}, {\bar x}_{t j t}) \in \mathcal{X}_{t-1} \small{\times} \mbox{ri}( \mathcal{X}_t )$
such that  $A_{t j} {\bar x}_{t j t} + B_{t j} {\bar x}_{t j t-1} = b_{t j}$,
and $({\bar x}_{t j t-1}, {\bar x}_{t j t}) \in \mbox{ri}( \{ g_t(\cdot, \cdot, \xi_{t j}) \leq 0 \} )$. \\

\begin{rem} For a problem of form \eqref{pbtosolve} where the strong convexity assumption of 
functions $f_t(\cdot, \cdot,\xi_{t j})$ fails to hold, if for every $t, j$ 
function $f_t(\cdot, \cdot,\xi_{t j})$ is convex and if 
the columns
of matrix $( A_{t j} \, B_{t j})$ are independant 
 then we may reformulate the problem pushing and penalizing the linear coupling constraints
 in the objective, ending up with the strongly convex cost function
 $f_t(\cdot, \cdot,\xi_{t j}) + \rho_t \|A_{t j} x_t + B_{t j} x_{t-1} - b_{t j}  \|_2^2$
 in variables $(x_t, x_{t-1})$
 for stage $t$ realization $\xi_{t j}$ for some well chosen penalization $\rho_t >0$.
\end{rem}

\section{DASC: extension of QCC method for MSPs}\label{sec:dasc}

In this section, we introduce DASC to solve 
MSPs, which can be seen as an extension of the popular SDDP method to solve MSPs,
and which, same as QCC,
uses quadratic approximations (called cuts), to
approximate functions in the objective.

Due to Assumption (H0), the $M^{T-1}$ realizations of $(\xi_t)_{t=1}^T$ form a scenario tree of depth $T+1$
where the root node $n_0$ associated to a stage $0$ (with decision $x_0$ taken at that
node) has one child node $n_1$
associated to the first stage (with $\xi_1$ deterministic).

We denote by $\mathcal{N}$ the set of nodes, by
{\tt{Nodes}}$(t)$ the set of nodes for stage $t$ and
for a node $n$ of the tree, we define: 
\begin{itemize}
\item $C(n)$: the set of children nodes (the empty set for the leaves);
\item $x_n$: a decision taken at that node;
\item $p_n$: the transition probability from the parent node of $n$ to $n$;
\item $\xi_n$: the realization of process $(\xi_t)$ at node $n$\footnote{The same notation $\xi_{\tt{Index}}$ is used to denote
the realization of the process at node {\tt{Index}} of the scenario tree and the value of the process $(\xi_t)$
for stage {\tt{Index}}. The context will allow us to know which concept is being referred to.
In particular, letters $n$ and $m$ will only be used to refer to nodes while $t$ will be used to refer to stages.}:
for a node $n$ of stage $t$, this realization $\xi_n$ contains in particular the realizations
$b_n$ of $b_t$, $A_{n}$ of $A_{t}$, and $B_{n}$ of $B_{t}$;
\item $\xi_{[n]}$: the history of the realizations of process $(\xi_t)$ from the first stage node $n_1$ to node $n$:
 for a node $n$ of stage $t$, the $i$-th component of $\xi_{[n]}$ is $\xi_{\mathcal{P}^{t-i}(n)}$ for $i=1,\ldots, t$,
 where $\mathcal{P}:\mathcal{N} \rightarrow \mathcal{N}$ is the function 
 associating to a node its parent node (the empty set for the root node).
\end{itemize}

Similary to SDDP, at iteration $k$,  trial points $x_n^k$ are computed in a forward pass
for all nodes $n$ of the scenario tree replacing recourse functions 
$\mathcal{Q}_{t+1}$ by the approximations $\mathcal{Q}_{t+1}^{k-1}$ available at the beginning of this iteration.

In a backward pass, we then select a set of nodes $(n_1^k, n_2^k, \ldots, n_T^k)$ 
(with $n_1^k=n_1$, and for $t \geq 2$, $n_t^k$ a node of stage $t$, child of node $n_{t-1}^k$) 
corresponding to a sample $({\tilde \xi}_1^k, {\tilde \xi}_2^k,\ldots, {\tilde \xi}_T^k)$
of $(\xi_1, \xi_2,\ldots, \xi_T)$. For $t=2,\ldots,T$, a cut  
\begin{equation}\label{eqcutctk}
\mathcal{C}_t^k( x_{t-1} ) = \theta_t^{k} +  \langle \beta_t^{k}, x_{t-1}-x_{n_{t-1}^{k}}^{k} \rangle + \frac{\alpha_t}{2}\| x_{t-1}-x_{n_{t-1}^{k}}^{k} \|_2^2
\end{equation}
is computed for $\mathcal{Q}_t$ at $x_{n_{t-1}^{k}}^{k}$
where 
\begin{equation}\label{formulaalphat}
\alpha_t = \sum_{j=1}^M p_{t j} \alpha_{t j},
\end{equation}
and where the computation of coefficients $\theta_t^{k}, \beta_t^{k}$ is given below.
We show in Section \ref{convanalysis} that cut $\mathcal{C}_t^k$ is a lower bounding function
for $\mathcal{Q}_t$. Contrary to SDDP where cuts are affine functions our cuts are quadratic functions, as in QCC. 
In the end of iteration $k$, we obtain the lower approximations $\mathcal{Q}_{t}^{k}$ of $\mathcal{Q}_t,\;t=2,\ldots,T+1$, 
given by
$$
\begin{array}{lll}
\mathcal{Q}_{t}^{k}(x_{t-1}) & = &\displaystyle \max_{1 \leq \ell \leq k}\;\mathcal{C}_t^{\ell}( x_{t-1} ),
\end{array}
$$ 
which take the form of a maximum of quadratic functions.
The detailed steps of the DASC algorithm are given below.\\

\par  {\textbf{DASC, Step 1: Initialization.}} For $t=2,\ldots,T$,  take as initial approximations $\mathcal{Q}_t^0 \equiv -\infty$.  
Set $x_{n_0}^1 = x_0$, set the iteration count $k$ to 1, and $\mathcal{Q}_{T+1}^0 \equiv 0$. \\
\par {\textbf{DASC, Step 2: Forward pass.}} The forward pass performs the following computations: 
\par {\textbf{For }}$t=1,\ldots,T$,\\
\hspace*{1.6cm}{\textbf{For }}every node $n$ of stage $t-1$,\\
\hspace*{2.4cm}{\textbf{For }}every child node $m$ of node $n$, compute an optimal solution $x_m^k$ of
\begin{equation} \label{defxtkj}
{\underline{\mathfrak{Q}}}_t^{k-1}( x_n^k , \xi_m ) = \left\{
\begin{array}{l}
\displaystyle \inf_{x_m} \; F_t^{k-1}(x_m , x_n^k, \xi_m):= f_t( x_m , x_n^k , \xi_m ) + \mathcal{Q}_{t+1}^{k-1}( x_m ) \\
x_m \in X_t( x_n^k, \xi_m ),
\end{array}
\right.
\end{equation}
\hspace*{2.6cm}where $x_{n_0}^k = x_0$.\\
\hspace*{2.4cm}{\textbf{End For}}\\
\hspace*{1.6cm}{\textbf{End For}}
\par {\textbf{End For}}\\
\par  {\textbf{DASC, Step 3: Backward pass.}}
We select a set of nodes $(n_1^k, n_2^k, \ldots, n_T^k)$ 
with $n_t^k$ a node of stage $t$ ($n_1^k=n_1$ and for $t \geq 2$, $n_t^k$
a child node of $n_{t-1}^k$)
corresponding to a sample $({\tilde \xi}_1^k, {\tilde \xi}_2^k,\ldots, {\tilde \xi}_T^k)$
of $(\xi_1, \xi_2,\ldots, \xi_T)$.\\
Set $\theta_{T+1}^k=\alpha_{T+1}=0$ and $\beta_{T+1}^k=0$ which defines $\mathcal{C}_{T+1}^k \equiv 0$.\\
{\textbf{For }}$t=T,\ldots,2$,\\
\hspace*{0.8cm}{\textbf{For }}every child node $m$ of $n=n_{t-1}^k$ \\
\hspace*{1.6cm}Compute an optimal  solution $x_m^{B k}$ of
\begin{equation}\label{primalpbisddp}
{\underline{\mathfrak{Q}}}_t^k(x_n^k , \xi_m ) = \left\{
\begin{array}{l}
\displaystyle \inf_{x_m} \;F_t^k(x_m , x_n^k ,  \xi_m):=f_t( x_m , x_n^k , \xi_m) + \mathcal{Q}_{t+1}^k ( x_m )\\ 
x_m \in X_t(x_n^k , \xi_m   ).\\
\end{array}
\right.
\end{equation}
For the problem above, if $X_t$ is of type $S1$ we define the Lagrangian
$L(x_m, \lambda, \mu) = F_t^k(x_m , x_n^k ,  \xi_m) +   \lambda^T (A_m x_m + B_m x_n^k - b_m )$
and take optimal Lagrange multipliers $\lambda_m^k$.
If $X_t$ is of type $S2$ we define the Lagrangian
$L(x_m, \lambda, \mu) = F_t^k(x_m , x_n^k ,  \xi_m) +   \lambda^T (A_m x_m + B_m x_n^k - b_m ) + 
\mu^T g_t( x_m , x_n^k , \xi_m)$ and 
take optimal Lagrange multipliers $(\lambda_m^k, \mu_m^k)$.
If $X_t$ is of type $S1$,  denoting by $\mbox{SG}_{f_t( x_m^{B k}, \cdot , \xi_m )}( x_n^{k} )$
a subgradient of convex function $f_t( x_m^{B k}, \cdot , \xi_m  )$ at $x_n^{k}$,
we compute $\theta_{t}^{k m} = {\underline{\mathfrak{Q}}}_t^k(x_n^k , \xi_m )$
and
$$
\beta^{k m}=\mbox{SG}_{f_t( x_m^{B k } ,\cdot , \xi_m  )}( x_n^{k} )+ B_m^T \lambda_m^k.
$$
If $X_t$ is of type $S2$ denoting by 
$\mbox{SG}_{g_{t i} (x_m^{B k}, \cdot , \xi_m )}(x_n^{k})$ a subgradient of convex function $g_{t i} ( x_m^{B k} , \cdot , \xi_m )$
at $x_n^{k}$ we compute $\theta_{t}^{k m} = {\underline{\mathfrak{Q}}}_t^k(x_n^k , \xi_m )$ and
$$
\beta^{k m}=\mbox{SG}_{f_t( x_m^{B k } ,\cdot , \xi_m  )}( x_n^{k} )+ B_m^T \lambda_m^k + \sum_{i=1}^p \mu_m^k(i) \mbox{SG}_{g_{t i} ( x_m^{B k } ,\cdot , \xi_m  )}(x_n^{k}).
$$
\hspace*{0.8cm}{\textbf{End For}}\\
\hspace*{0.8cm}The new cut $\mathcal{C}_t^k$ is obtained  computing
\begin{equation}\label{formulathetak}
\theta_t^k=\sum_{m \in C(n)} p_m \theta_{t}^{k m} \mbox{  and  } \beta_t^k=\sum_{m \in C(n)} p_m  \beta^{k m}.
\end{equation}
{\textbf{End For}}\\
\par {\textbf{DASC, Step 4:}} Do $k \leftarrow k+1$ and go to Step 2.

\begin{rem} In DASC, decisions are computed at every iteration for all the nodes of the scenario tree
in the forward pass.
However, in practice, sampling will be used in the forward pass to compute
at iteration $k$  decisions only for the nodes $(n_1^k,\ldots,n_T^k)$
and their children nodes. 
The variant of DASC written above is convenient for the convergence analysis of the method, presented in the 
next section. From this convergence analysis, it is possible to show the convergence of the variant
of DASC which uses sampling in the forward pass (see also Remark 5.4 in \cite{guigues2016isddp}
and Remark 4.3 in  \cite{guilejtekregsddp}).
\end{rem} 

\begin{rem} By change of variable we can write $\mathcal{Q}_t^k$ under the form
$$
\mathcal{Q}_t^k ( x_{t-1} ) = \frac{\alpha_t}{2}\|x_{t-1} - {\bar x}_{t-1}^k\|_2^2 + \max_{1 \leq j \leq k} \;{\tilde \theta}_t^k + \langle \tilde \beta_t^k , x_{t-1} \rangle.
$$
Therefore all subproblems can be reformulated as quadratic programs with a quadratic term multiple of $\|x-\bar x\|_2^2$
and linear constraints (the same number as with SDDP).
\end{rem}

\section{Convergence analysis of DASC}\label{convanalysis}

In Theorem \ref{convsddpsconv} below we show the convergence of DASC making the following additional assumption:\\

\par (H2) The samples in the backward passes are independent: $(\tilde \xi_2^k, \ldots, \tilde \xi_T^k)$ is a realization of
$\xi^k=(\xi_2^k, \ldots, \xi_T^k) \sim (\xi_2, \ldots,\xi_T)$ 
and $\xi^1, \xi^2,\ldots,$ are independent.\\

\par We will make use of the following lemma:
\begin{lemma} \label{convrecfuncQtS} Let Assumptions (H0) and (H1) hold. Then for $t=2,\ldots,T+1$, function $\mathcal{Q}_t$ is convex and Lipschitz continuous on 
$\mathcal{X}_{t-1}$.
\end{lemma}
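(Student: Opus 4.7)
The plan is to prove the lemma by backward induction on $t$, starting from $t=T+1$ and going down to $t=2$.

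The base case $t=T+1$ is immediate: $\mathcal{Q}_{T+1}\equiv 0$ is (trivially) convex and Lipschitz continuous on $\mathcal{X}_T$. For the inductive step, I assume that $\mathcal{Q}_{t+1}$ is convex and Lipschitz continuous on $\mathcal{X}_t$, and I want to draw the same conclusion for $\mathcal{Q}_t$ on $\mathcal{X}_{t-1}$. Fix any realization $\xi_{tj}$. Consider the parametric program defining $\mathfrak{Q}_t(\cdot,\xi_{tj})$: the objective $f_t(x_t,x_{t-1},\xi_{tj})+\mathcal{Q}_{t+1}(x_t)$ is strongly convex in $(x_t,x_{t-1})$ by (H1)-(a) combined with the induction hypothesis (since the sum of a strongly convex function and a convex function is strongly convex), and the feasible set $X_t(x_{t-1},\xi_{tj})\cap\mathcal{X}_t$ is defined by affine equalities and (in the S2 case) convex inequalities by (H1)-(d), with $\mathcal{X}_t,Y=\mathbb{R}^n$ satisfying the hypotheses of Proposition~\ref{strongconvvfunc}. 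Thus Proposition~\ref{strongconvvfunc} applies and $\mathfrak{Q}_t(\cdot,\xi_{tj})$ is (strongly) convex, hence convex, on its domain.

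Next I argue that $\mathfrak{Q}_t(\cdot,\xi_{tj})$ is real-valued on the $\varepsilon_t$-neighborhood $\mathcal{X}_{t-1}^{\varepsilon_t}$ of $\mathcal{X}_{t-1}$: by (H1)-(c) the feasible set is nonempty there, and by (H1)-(b) combined with compactness of $\mathcal{X}_t$ and lower semicontinuity of $f_t$ (from (H1)-(a)) the infimum is finite (and in fact attained). Therefore $\mathfrak{Q}_t(\cdot,\xi_{tj})$ is a finite convex function on an open convex set containing the compact set $\mathcal{X}_{t-1}$, so by a standard result in convex analysis it is Lipschitz continuous on $\mathcal{X}_{t-1}$ with some constant $L_{tj}$. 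Taking convex combinations,
\[
\mathcal{Q}_t(x_{t-1})=\sum_{j=1}^M p_{tj}\,\mathfrak{Q}_t(x_{t-1},\xi_{tj})
\]
is a nonnegative combination of convex Lipschitz functions, hence convex and Lipschitz on $\mathcal{X}_{t-1}$ with constant $L_t=\sum_j p_{tj}L_{tj}$, closing the induction.

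The step I expect to require the most care is the passage from "finite convex on a neighborhood of $\mathcal{X}_{t-1}$" to "Lipschitz on $\mathcal{X}_{t-1}$": this hinges on exploiting assumption (H1)-(c) to ensure the $\varepsilon_t$-fattening stays inside the domain of $\mathfrak{Q}_t(\cdot,\xi_{tj})$, together with compactness of $\mathcal{X}_{t-1}$ from (H1)-(b) so that the local Lipschitz property of convex functions upgrades to a global one on the compact set. The remaining delicate point is verifying the hypotheses of Proposition~\ref{strongconvvfunc} in the S2 case, where one must check that (H1)-(d) and the Slater-type condition (H1)-(e) suffice for the feasible-set convexity and nonemptiness used in the proposition's proof; this follows because $(x_t,x_{t-1})\mapsto g_t(x_t,x_{t-1},\xi_{tj})$ is jointly convex on $\mathcal{X}_t\times\mathcal{X}_{t-1}^{\tilde\varepsilon_t}$ and the equality constraints are affine in $(x_t,x_{t-1})$.
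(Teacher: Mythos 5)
Your proof is correct and follows essentially the same route as the paper, whose own ``proof'' merely points to the analogous Lemma 3.2 of Guigues (SIOPT 2016) and Lemma 2.2 of Girardeau--Leclère--Philpott; those proofs are exactly your backward induction (value function of a jointly convex program is convex; finiteness on the $\varepsilon_t$-enlargement from (H1)-(b,c) makes it locally Lipschitz, hence Lipschitz on the compact $\mathcal{X}_{t-1}$; then average over the finitely many realizations $\xi_{tj}$). In effect you have supplied the details the paper omits, and the argument is sound.
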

\begin{proof} The proof is analogue to the proofs of Lemma 3.2 in \cite{guiguessiopt2016} and
Lemma 2.2 in \cite{lecphilgirar12}.\hfill
\end{proof}

\begin{thm} \label{convsddpsconv}
Consider the sequences of stochastic decisions $x_n^k$ and of recourse functions $\mathcal{Q}_ t^k$
generated by DASC.
Let Assumptions (H0), (H1) and (H2) hold. Then
\begin{itemize}
\item[(i)] almost surely, for $t=2,\ldots,T+1$, the following holds:
$$
\mathcal{H}(t): \;\;\;\forall n \in {\tt{Nodes}}(t-1), \;\; \displaystyle \lim_{k \rightarrow +\infty} \mathcal{Q}_{t}(x_{n}^{k})-\mathcal{Q}_{t}^{k}(x_{n}^{k} )=0.
$$
\item[(ii)]
Almost surely, the limit of the sequence
$( {F}_1^{k-1}(x_{n_1}^k , x_0 ,  \xi_1) )_k$ of the approximate first stage optimal values
and of the sequence
$({\underline{\mathfrak{Q}}}_1^{k}(x_{0}, \xi_1))_k$
is the optimal value 
$\mathcal{Q}_{1}(x_0)$ of \eqref{pbtosolve}.
Let $\Omega=(\Theta_2 \small{\times} \ldots \small{\times} \Theta_T)^{\infty}$ be the sample space
of all possible sequences of scenarios equipped with the product $\mathbb{P}$ of the corresponding 
probability measures. Define on $\Omega$ the random variable 
$x^* = (x_1^*, \ldots, x_T^*)$ as follows. For $\omega \in \Omega$, consider the 
corresponding sequence of decisions $( (x_n^k( \omega ))_{n \in \mathcal{N}} )_{k \geq 1}$
computed by DASC. Take any accumulation point
$(x_n^* (\omega) )_{n \in \mathcal{N}}$ of this sequence. 
If $\mathcal{Z}_t$ is the set of $\mathcal{F}_t$-measurable functions,
define $x_1^*(\omega),\ldots,x_T^*(\omega)$ taking $x_t^{*}(\omega): \mathcal{Z}_t \rightarrow \mathbb{R}^n$ given by
$x_t^{*}(\omega)( \xi_1, \ldots, \xi_t  )=x_{m}^{*}(\omega)$ where $m$ is given by $\xi_{[m]}=(\xi_1,\ldots,\xi_t)$ for $t=1,\ldots,T$.
Then $\mathbb{P}((x_1^*,\ldots,x_T^*) \mbox{ is an optimal solution to \eqref{pbtosolve}})  =1$.
\end{itemize}
\end{thm}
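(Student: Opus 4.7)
The plan is to prove part (i) by backward induction on $t$ from $T+1$ down to $2$, and then derive part (ii) from it. The base case $\mathcal{H}(T+1)$ is immediate because by convention $\mathcal{Q}_{T+1}\equiv \mathcal{Q}_{T+1}^{k}\equiv 0$.

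For the induction step, the first ingredient is cut validity: $\mathcal{C}_t^k\leq \mathcal{Q}_t$ on $\mathcal{X}_{t-1}$, with the tightness relation $\mathcal{C}_t^k(x_{n_{t-1}^k}^k)=\sum_{m\in C(n_{t-1}^k)} p_m \underline{\mathfrak{Q}}_t^k(x_{n_{t-1}^k}^k,\xi_m)$. Both follow from Proposition~\ref{strongconvvfunc}: since $\mathcal{Q}_{t+1}^k$ is convex (a maximum of quadratics with nonnegative leading coefficient), the cost $F_t^k(\cdot,\cdot,\xi_m)$ is strongly convex in $(x_m,x_{t-1})$ with constant $\alpha_{t j}$, hence $\underline{\mathfrak{Q}}_t^k(\cdot,\xi_m)$ is strongly convex in $x_{t-1}$ with the same constant. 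The formula for $\beta^{k m}$ is a valid subgradient of $\underline{\mathfrak{Q}}_t^k(\cdot,\xi_m)$ at $x_{n_{t-1}^k}^k$ by the Lagrangian subdifferential calculus, with the Slater conditions (H1)(c)--(e) providing the needed constraint qualification. Plugging this into \eqref{characscf}, averaging over $m\in C(n_{t-1}^k)$ with weights $p_m$ so that $\sum_m p_m\alpha_{t j}=\alpha_t$, and using $\mathcal{Q}_{t+1}^k\leq \mathcal{Q}_{t+1}$, yields $\mathcal{C}_t^k\leq \mathcal{Q}_t$.

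The core of the induction step rests on the telescoping bound
\[
0\leq \mathcal{Q}_t(x_n^k)-\mathcal{Q}_t^k(x_n^k)\leq \sum_{m\in C(n)} p_m\bigl[\mathcal{Q}_{t+1}(x_m^{B k})-\mathcal{Q}_{t+1}^k(x_m^{B k})\bigr],
\]
for $n=n_{t-1}^k$, obtained by plugging the feasible point $x_m^{B k}$ into $\mathfrak{Q}_t(x_n^k,\xi_m)$. By (H2) and Borel--Cantelli, the random set $S_n=\{k:n_{t-1}^k=n\}$ is almost surely infinite for every node $n$. Along any subsequence of $S_n$ I pass to a further subsequence on which $x_n^{k_j}\to\bar x\in\mathcal{X}_{t-1}$ and $x_m^{B k_j}\to\bar y_m\in\mathcal{X}_t$ for every child $m$ (using compactness from (H1)(b)). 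The main obstacle will be to show that $\mathcal{Q}_{t+1}(x_m^{B k_j})-\mathcal{Q}_{t+1}^{k_j}(x_m^{B k_j})\to 0$ along this subsequence, because the inductive hypothesis $\mathcal{H}(t+1)$ only delivers convergence at the forward-pass trial points $x_{n'}^k$ for $n'\in{\tt Nodes}(t)$, not at the backward points $x_m^{B k}$. I expect to bridge this gap by establishing an equi-Lipschitz bound on $(\mathcal{Q}_{t+1}^k)_k$ on $\mathcal{X}_t$: the cut slopes $\beta_{t+1}^k$ are uniformly bounded in $k$ because the multipliers $\lambda_m^k,\mu_m^k$ remain bounded under the Slater hypotheses (H1)(c)--(e) and the subgradients of $f_t$ remain bounded on the compact set $\mathcal{X}_t\times\mathcal{X}_{t-1}$. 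The equi-Lipschitz property, combined with cut validity and the monotonicity $\mathcal{Q}_{t+1}^k\nearrow$, allows me to transport the pointwise convergence at forward trial points (given by $\mathcal{H}(t+1)$) to uniform convergence on $\mathcal{X}_t$, and therefore to $\bar y_m$ and to $x_m^{B k_j}$. Once $\mathcal{H}(t)$ holds along every such subsequence of $S_n$, the standard ``every subsequence has a further subsequence converging to zero'' argument, together with compactness of $\mathcal{X}_{t-1}$, extends the conclusion to the full sequence of indices.

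For part (ii), convergence of $\underline{\mathfrak{Q}}_1^{k}(x_0,\xi_1)$ and $F_1^{k-1}(x_{n_1}^k,x_0,\xi_1)$ to $\mathcal{Q}_1(x_0)$ is a direct consequence of $\mathcal{H}(2)$ applied at the deterministic first-stage state $x_{n_0}^k=x_0$, combined with the sandwich $\underline{\mathfrak{Q}}_1^k\leq \mathcal{Q}_1$ and the analogous estimate for the forward-pass value. For the policy statement, fix $\omega$ and an accumulation point $(x_n^*(\omega))_{n\in\mathcal{N}}$ along a common subsequence $k_j$ (which exists by compactness of the $\mathcal{X}_t$). Passing to the limit in the optimality conditions characterizing $x_m^{k_j}$, using Berge's theorem of the maximum together with continuity of $\mathcal{Q}_{t+1}$ (Lemma~\ref{convrecfuncQtS}) and the uniform convergence $\mathcal{Q}_{t+1}^{k_j}\to \mathcal{Q}_{t+1}$ obtained above, one verifies that $x_m^*(\omega)$ solves the exact Bellman subproblem \eqref{secondstodp} at stage $t$ with parent state $x_{\mathcal{P}(m)}^*(\omega)$ and realization $\xi_m$. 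Hence the nonanticipative policy $(x_1^*,\ldots,x_T^*)$ induced by $x^*(\omega)$ is optimal for \eqref{pbtosolve} with probability one.
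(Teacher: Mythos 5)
Your cut-validity argument (strong convexity of $\underline{\mathfrak{Q}}_t^k(\cdot,\xi_m)$ via Proposition \ref{strongconvvfunc}, $\beta^{km}$ a subgradient, then \eqref{characscf} averaged over the children with weights $p_m$) matches the paper and is fine. The induction step, however, has a genuine gap exactly at the point you flagged. You bound the stage-$t$ gap by $\sum_m p_m\bigl[\mathcal{Q}_{t+1}(x_m^{Bk})-\mathcal{Q}_{t+1}^k(x_m^{Bk})\bigr]$, evaluated at the \emph{backward} points, and then propose to reach those points by upgrading $\mathcal{H}(t+1)$ to \emph{uniform} convergence of $\mathcal{Q}_{t+1}^k$ to $\mathcal{Q}_{t+1}$ on $\mathcal{X}_t$ via an equi-Lipschitz bound. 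That upgrade is false: equi-Lipschitz continuity plus monotonicity yields uniform convergence of $\mathcal{Q}_{t+1}^k$ to \emph{some} limit $\overline{\mathcal{Q}}_{t+1}\le\mathcal{Q}_{t+1}$, but $\mathcal{H}(t+1)$ only forces equality at accumulation points of the trial sequences $(x_{n'}^k)_k$, which need not be dense in $\mathcal{X}_t$; in SDDP-type methods the approximations are generically \emph{not} tight away from the visited states. The paper sidesteps the obstacle rather than solving it: it first relaxes $\underline{\mathfrak{Q}}_t^k(x_n^k,\xi_m)\ge\underline{\mathfrak{Q}}_t^{k-1}(x_n^k,\xi_m)=F_t^{k-1}(x_m^k,x_n^k,\xi_m)$, where $x_m^k$ is the \emph{forward-pass} solution, so the resulting bound $\sum_m p_m\bigl[\mathcal{Q}_{t+1}(x_m^k)-\mathcal{Q}_{t+1}^{k-1}(x_m^k)\bigr]$ involves exactly the points covered by $\mathcal{H}(t+1)$, the shift from index $k$ to $k-1$ being handled by Lemma A.1 of \cite{lecphilgirar12} using the Lipschitz continuity and monotonicity of the approximations. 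You should redo your telescoping with $\underline{\mathfrak{Q}}_t^{k-1}$ and the forward points $x_m^k$.

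There is a second gap: extending from $k\in\mathcal{S}_n$ to all $k$ by ``every subsequence has a further convergent subsequence'' does not work, because you only control the gap along subsequences of $\mathcal{S}_n$; nothing deterministic ties the iterations $k\notin\mathcal{S}_n$ (where node $n$ is not sampled and no new cut is centered at $x_n^k$) to those in $\mathcal{S}_n$. This step is genuinely probabilistic: the paper invokes Lemma 5.4 of \cite{guilejtekregsddp}, which uses (H2), the independence of the sample $\xi^k$ from the history $\bigl((x_n^j)_{j\le k},(\mathcal{Q}_t^j)_{j\le k-1}\bigr)$, and the Strong Law of Large Numbers. Finally, your part (ii) also leans on the unavailable uniform convergence in the Berge argument; the paper's route (Theorem 5.3-(ii) of \cite{guigues2016isddp}) works from $\mathcal{H}(t)$ at the trial points together with semicontinuity, not from uniform convergence of the approximations.
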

\begin{proof} 
\par Let us prove (i). We first check by induction on $k$ and
backward induction on $t$ that for all $k \geq 0$,
for all $t=2,\ldots,T+1$, for any node $n$ of stage $t-1$ and decision $x_n$ taken at that node we have
\begin{equation}\label{validcut}
\mathcal{Q}_t ( x_{n} ) \geq \mathcal{C}_t^k ( x_{n} ),
\end{equation}
almost surely.
For any fixed $k$, relation \eqref{validcut} holds for $t=T+1$ and if it holds until iteration $k$ for 
$t+1$ 
with  $t \in \{2,\ldots,T\}$, we deduce that for any node $n$ of stage $t-1$ and decision $x_n$ taken at that node we have
$\mathcal{Q}_{t+1}( x_n ) \geq \mathcal{Q}_{t+1}^k ( x_n )$, 
$\mathfrak{Q}_t( x_n , \xi_m ) \geq  {\underline{\mathfrak{Q}}}_t^{k}( x_n , \xi_m )$
for any child node $m$ of $n$.
Now note that function $(x_m,x_n) \rightarrow \mathcal{Q}_{t+1}^k(x_m)$ is convex (as a maximum of convex functions) and recalling
that $(x_m,x_n) \rightarrow f_t(x_m,x_n,\xi_m)$ is strongly convex with constant of strong convexity
$\alpha_{t m}$, the function $(x_m,x_n) \rightarrow f_t(x_m,x_n,\xi_m) +\mathcal{Q}_{t+1}^k( x_m )$ is also
strongly convex with the same parameter of strong convexity. Using Proposition \ref{strongconvvfunc},
it follows that ${\underline{\mathfrak{Q}}}_t^{k}( \cdot , \xi_m )$  is strongly convex  
with constant of strong convexity $\alpha_{t m}$.
Using Lemma 2.1 in \cite{guiguessiopt2016} we have that
$\beta^{k m} \in \partial  {\underline{\mathfrak{Q}}}_t^k(\cdot , \xi_m )(x_{n_{t-1}^k}^{k})$.
Recalling characterization \eqref{characscf} of strongly convex functions,
we get for any $x_n \in \mathcal{X}_{t-1}$:
$$
\begin{array}{lll}
{\underline{\mathfrak{Q}}}_t^{k}( x_n , \xi_m ) & \geq &  {\underline{\mathfrak{Q}}}_t^{k}( x_{n_{t-1}^k}^k , \xi_m )+ \langle \beta^{k m} , x_n - x_{n_{t-1}^k}^k \rangle + \frac{\alpha_{t m}}{2} \|x_n- x_{n_{t-1}^k}^k\|_2^2
\end{array} 
$$
and therefore for any node $n$ of stage $t-1$  and decision $x_n$ taken at that node we have 
\begin{equation}\label{checkvalidcut}
\begin{array}{lll}
\mathcal{Q}_t( x_n ) & = & \displaystyle \sum_{m \in C(n)} p_m \mathfrak{Q}_t( x_n , \xi_m )\\
& \geq  & \displaystyle \sum_{m \in C(n)} p_m {\underline{\mathfrak{Q}}}_t^{k}( x_n , \xi_m )\\
& \geq & \displaystyle \sum_{m \in C(n)} p_m \Big(    {\underline{\mathfrak{Q}}}_t^{k}( x_{n_{t-1}^k}^k , \xi_m )
+ \langle \beta^{k m} , x_n - x_{n_{t-1}^k}^k \rangle + \frac{\alpha_{t m}}{2} \|x_n -x_{n_{t-1}^k}^k\|_2^2  \Big)\\
& = & \theta_t^{k} +  \langle \beta_t^{k}, x_{n}-x_{n_{t-1}^{k}}^k \rangle + \frac{\alpha_t}{2}\| x_{n}-x_{n_{t-1}^{k}}^k \|_2^2 \\
& = & \mathcal{C}_t^k (x_n). 
\end{array} 
\end{equation}
This completes the induction step and shows \eqref{validcut} for every $t, k$.

\par Let $\Omega_1$ be the event on the sample space $\Omega$  of sequences
of scenarios such that every scenario is sampled an infinite number of times.
Due to (H2), this event has probability one.
Take an arbitrary  realization $\omega$ of DASC in $\Omega_1$. 
To simplify notation we will use $x_n^k, \mathcal{Q}_t^k, \theta_t^k, \beta_t^k$ instead 
of $x_n^k(\omega), \mathcal{Q}_t^k(\omega), \theta_t^k(\omega), \beta_t^k( \omega )$.\\
We want to show that $\mathcal{H}(t), t=2,\ldots,T+1$, hold for that realization.
The proof is by backward induction on $t$. For $t=T+1$, $\mathcal{H}(t)$ holds
by definition of $\mathcal{Q}_{T+1}$, $\mathcal{Q}_{T+1}^k$. Now assume that $\mathcal{H}(t+1)$ holds
for some $t \in \{2,\ldots,T\}$. We want to show that $\mathcal{H}(t)$ holds.
Take an arbitrary node $n \in {\tt{Nodes}}(t-1)$. For this node we define 
$\mathcal{S}_n=\{k \geq 1 : n_{t-1}^k = n\}$ the set of iterations such that the sampled scenario passes through node $n$.
Observe that $\mathcal{S}_n$ is infinite because the realization of DASC is in $\Omega_1$.
We first show that 
$$
\displaystyle \lim_{k \rightarrow +\infty, k \in \mathcal{S}_n } \mathcal{Q}_{t}(x_{n}^{k})-\mathcal{Q}_{t}^{k}(x_{n}^{k} )=0. 
$$
For $k \in \mathcal{S}_n$, we have $n_{t-1}^k =n$, i.e., $x_n^k = x_{n_{t-1}^k}^k$, which implies, using \eqref{validcut}, that
\begin{equation}\label{firsteqstosddp}
\mathcal{Q}_t ( x_n^k ) \geq \mathcal{Q}_t^k ( x_n^k ) \geq  \mathcal{C}_t^k ( x_n^k ) = \theta_t^k =  \sum_{m \in C(n)} p_m \theta_{t}^{k m} 
= \sum_{m \in C(n)} p_m  {\underline{\mathfrak{Q}}}_t^k(x_n^k , \xi_m )
\end{equation}
by definition of $\mathcal{C}_t^k$ and $\theta_t^k$.
It follows that for any $k \in \mathcal{S}_n$ we have
\begin{equation} \label{eqconv1bisfutures}
\begin{array}{lll}
0 \leq \mathcal{Q}_{t}(x_{n}^k) - \mathcal{Q}_{t}^k(x_{n}^k) & \leq & 
\displaystyle \sum_{m \in C(n)} p_m \Big( \mathfrak{Q}_t(x_n^k , \xi_m )  -    {\underline{\mathfrak{Q}}}_t^k(x_n^k , \xi_m )    \Big) \\ 
&  \leq & 
\displaystyle \sum_{m \in C(n)} p_m \Big( \mathfrak{Q}_t(x_n^k , \xi_m )  -    {\underline{\mathfrak{Q}}}_t^{k-1}(x_n^k , \xi_m )    \Big)  \\
&  = & 
\displaystyle \sum_{m \in C(n)} p_m \Big( \mathfrak{Q}_t(x_n^k , \xi_m )  -   F_t^{k-1}(x_m^{k}, x_{n}^k, \xi_m )     \Big)  \\ 
&  = & 
\displaystyle \sum_{m \in C(n)} p_m \Big( \mathfrak{Q}_t(x_n^k , \xi_m )  -   f_t(x_m^{k}, x_{n}^k, \xi_m ) -  \mathcal{Q}_{t+1}^{k-1}( x_m^k )     \Big)  \\ 
&  = & 
\displaystyle \sum_{m \in C(n)} p_m \Big( \mathfrak{Q}_t(x_n^k , \xi_m )  -   F_t(x_m^{k}, x_{n}^k, \xi_m ) + \mathcal{Q}_{t+1}( x_m^k )  -  \mathcal{Q}_{t+1}^{k-1}( x_m^k )     \Big)  \\ 
 & \leq  & \displaystyle \sum_{m \in C(n)} p_m \Big(  \mathcal{Q}_{t+1}( x_m^k ) - \mathcal{Q}_{t+1}^{k-1}( x_m^k ) \Big),
\end{array}
\end{equation}
where for the last inequality we have used the definition of $\mathfrak{Q}_t$  and the fact that $x_m^k \in X_t ( x_n^k , \xi_m )$.

Next, recall that $\mathcal{Q}_{t+1}$ is convex; by Lemma \ref{convrecfuncQtS} functions $(\mathcal{Q}_{t+1}^k)_k$ are Lipschitz continuous;
and for all $k \geq 1$ we have $\mathcal{Q}_{t+1}^k \leq \mathcal{Q}_{t+1}^{k+1} \leq\mathcal{Q}_{t+1}$ on compact set $\mathcal{X}_t$.
Therefore, the induction hypothesis
$$
\lim_{k \rightarrow +\infty} \mathcal{Q}_{t+1}( x_m^k ) - \mathcal{Q}_{t+1}^k ( x_m^k )=0 
$$
implies, using Lemma A.1 in \cite{lecphilgirar12}, that
\begin{equation}\label{indchypreformulatedsto}
\lim_{k \rightarrow +\infty} \mathcal{Q}_{t+1}( x_m^k ) - \mathcal{Q}_{t+1}^{k-1} ( x_m^k )=0 . 
\end{equation}

Plugging \eqref{indchypreformulatedsto} into
\eqref{eqconv1bisfutures} we obtain 
\begin{equation}\label{mainisddp}
\displaystyle \lim_{k \rightarrow +\infty, k \in \mathcal{S}_n} \mathcal{Q}_{t}(x_{n}^{k})-\mathcal{Q}_{t}^{k}(x_{n}^{k} )=0. 
\end{equation}
It remains to show that
\begin{equation}\label{alsoconvnotinsnisddp}
\displaystyle \lim_{k \rightarrow +\infty, k \notin \mathcal{S}_n } \mathcal{Q}_{t}(x_{n}^{k})-\mathcal{Q}_{t}^{k}(x_{n}^{k} )=0. 
\end{equation}
The relation above can be proved using Lemma 5.4 in \cite{guilejtekregsddp} which can be applied since 
(A) relation \eqref{mainisddp} holds (convergence was shown for the iterations in $\mathcal{S}_n$),
(B) the sequence $(\mathcal{Q}_t^k)_k$ is monotone, i.e., 
$\mathcal{Q}_t^k \geq \mathcal{Q}_t^{k-1}$ for all $k \geq 1$, (C) Assumption (H2) holds, and
(D) $\xi_{t-1}^k$ is independent on $( (x_{n}^j,j=1,\ldots,k), (\mathcal{Q}_{t}^j,j=1,\ldots,k-1))$.\footnote{Lemma 5.4 in \cite{guilejtekregsddp} is similar to the end of the proof of Theorem 4.1 in \cite{guiguessiopt2016} and uses
the Strong Law of Large Numbers. This lemma itself applies the ideas of the end of the convergence proof of SDDP given in \cite{lecphilgirar12}, which
was given with a different (more general) sampling scheme in the backward pass.} Therefore, we have shown (i).\\

\par (ii) can be proved as Theorem 5.3-(ii) in \cite{guigues2016isddp} using (i).\hfill
\end{proof}

\section{Numerical experiments}\label{sec:num}

Consider the Dynamic Programming equations
given by
$\mathcal{Q}_t( x_{t-1} )=\mathbb{E}_{\xi_t}[\mathfrak{Q}_t(x_{t-1},\xi_t)]$
for $t=1,\ldots,T$, $x_0$ given,
$\mathcal{Q}_{T+1} \equiv 0$, where for
$t=1,\ldots,T$, we have
$$
\mathfrak{Q}_t(x_{t-1},\xi_t) =
\left\{ 
\begin{array}{l}
\displaystyle \inf_{x_t \in \mathbb{R}^n} 
\frac{1}{2}\left( 
\begin{array}{c}
x_{t-1}\\
x_t
\end{array}
\right)^T \left(\xi_t \xi_t^T + \lambda_0 I \right)
\left( 
\begin{array}{c}
x_{t-1}\\
x_t
\end{array}
\right) + 
\xi_t^T \left( 
\begin{array}{c}
x_{t-1}\\
x_t
\end{array}
\right) + \mathcal{Q}_{t+1} ( x_t )\\
x_t \geq 0, \displaystyle \sum_{i=1}^n x_t(i)=1.
\end{array}
\right.
$$

Recourse functions 
$\mathcal{Q}_t$ are strongly convex
with parameter $\lambda_0$ for $\|\cdot\|_2$ and therefore
DASC and SDDP can be applied to solve this problem.
We run DASC and SDDP
for several values of 
$T$, $n$ (the common size
of $x_{t-1}$ and $x_t$), $M$ (the number of possible realizations
of $\xi_t$ for every $t$),
and $\lambda_0$. Entries in realizations $\xi_{ti}$ of $\xi_t$ are drawn independently
from the uniform distribution in $[0,1]$.
As in SDDP, we can compute
for DASC at every iteration a lower
bound LB on the optimal value of the
problem which is the optimal value
of the approximate first stage problem
obtained replacing $\mathcal{Q}_2$
by its approximation $\mathcal{Q}_2^k$.
Similarly to SDDP, we also compute
with DASC a statistical upper bound UB. 
All iterations are run with
a single scenario in the forward pass.
We use the stopping test
from the numerical experiments
in \cite{vinalcheng}
computing the statistical upper bound from
the total cost (from $t=1$
to $t=T$) on the last 200 scenarios:
the algorithm stops when
(UB-LB)/UB$\leq \varepsilon$
with $\varepsilon=0.1$.

We report in Table \ref{tableres}
the CPU time (in seconds) needed to solve the problem and the lower (LB in the table) and upper (UB in the table) bounds
at termination. 

\begin{table}
\centering
\begin{tabular}{|c|c|c|c|c|c|c|c|}
 \hline
 T &  n & M & $\lambda_0$ & \begin{tabular}{c}DASC\\ (CPU)\end{tabular} & \begin{tabular}{c}SDDP\\ (CPU)\end{tabular} & DASC LB/UB & SDDP LB/UB\\
 \hline
10 &50 &10&$10^3$&1493&1187 &25 474/26 161&25 554/26 521\\
 \hline
10 &50&10&1&779&683 &382/417&382/418\\
 \hline
 4&100&5&$10^5$&84&418 &5.02x$10^5$/5.04x$10^5$&5.02x$10^5$/5.04x$10^5$\\
 \hline
  4&100&5&$10^6$&71&162 &5.004x$10^7$/5.008x$10^7$&5.004x$10^7$/5.009x$10^7$\\
 \hline
  3&500&5&$10^6$&543&1560 &2.5004x$10^8$/2.5004x$10^8$&2.5004x$10^8$/2.5004x$10^8$\\
 \hline
 3&600&5&$10^6$&984&4116 &3.0005x$10^8$/3.0005x$10^8$&3.0005x$10^8$/3.0005x$10^8$\\
 \hline
  3&200&5&$10^5$&141&791 &1.0006x$10^7$/1.0008.x$10^7$&1.0006.x$10^7$/1.0008x$10^7$\\
 \hline
\end{tabular}
\caption{CPU time (in seconds) and upper and lower bounds at termination for several instances with DASC and SDDP}\label{tableres}
\end{table}

The Matlab code of this implementation is availble
at \url{https://github.com/vguigues/DASC}
on github. In this implementation, linear and quadratic subproblems in DASC and SDDP were solved using Gurobi.
The methods were run on an Intel Core i7, 1.8GHz, processor with 12,0 Go of
RAM. We observe convergence of the upper and lower
bounds to the optimal value and at termination close approximate
optimal values for SDDP and DASC (as a check for
correctness of the implementations).
The CPU times are much smaller with DASC when the parameter $\lambda_0$
is large; otherwise they are of the same 
order of magnitude.
We indeed expect
that when the constant of strong convexity of $\mathcal{Q}_t$ is large ($\lambda_0$ for our instance) DASC provides much better approximations of these functions
and will converge quicker (providing "better" approximations and therefore "good" trial points quicker).

\section{Conclusion and extensions}\label{sec:conc}

We  introduced two new methods for convex
deterministic optimization problems: QCC (Quadratic Cuts
for Convex optimization) and QB (Quadratic Bundle method). We proved the complexity of
these methods for composite optimization problems which are the sum of a convex function $\tilde h$
and of a strongly convex function $\tilde f$. 
We extended this idea
of using quadratic approximations in the objective to build models for the recourse functions
of MSPs when the strong convexity assumption holds
for these functions, yielding DASC algorithm. We also proved
the convergence of DASC.

A number of interesting questions arise
from our developments:
\begin{itemize}
\item[(A)]  Variants of 
QCC where $\tau$ varies along iterations could be studied as well as strategies to choose parameter $\tau=\tau_j$
at iteration $j$.
\item[(B)] Similarly, variants of 
QB where $\tau$ and $\lambda$ vary  along iterations could be studied as well as strategies to choose parameters $\tau=\tau_j$ and
$\lambda=\lambda_j$
at iteration $j$.
\item[(C)] DASC applies to the case where recourse functions in MSPs are strongly convex. For linear multistage stochastic programs and discrete uncertainty, this assumption does not hold.
However, for linear two-stage stochastic programs and
continuous distributions, conditions are given in \cite{schultz1994}
ensuring strong convexity
of the second stage recourse function.
It is natural to study the extension of this result to MSPs: what conditions ensure
strong convexity (and for which  strong convexity parameter) of
recourse functions of linear MSPs having more than 2 stages?
For such problems, another task will therefore be to extend DASC
to the case when distributions of $\xi_t$ are continuous. 
\end{itemize}

\addcontentsline{toc}{section}{References}
\bibliographystyle{plain}
\bibliography{DASC}

\end{document}